\definecolor{ForestGreen}{rgb}{0.1,0.6,0.05}
\definecolor{EgyptBlue}{rgb}{0.063,0.1,0.6}
\let\OLDthebibliography\thebibliography
\renewcommand\thebibliography[1]{
	\OLDthebibliography{#1}
	\setlength{\parskip}{1pt}
	\setlength{\itemsep}{1pt plus 0.3ex}
}
\numberwithin{equation}{section}
\newtheorem{theorem}{Theorem}[section]
\newtheorem{lemma}[theorem]{Lemma}
\newtheorem{prop}[theorem]{Proposition}
\theoremstyle{definition}
\newtheorem{remark}[theorem]{Remark}
\title{
	\vspace*{-1cm}
	Asymptotic relation for zeros of cross-product\\ of Bessel functions and applications
}
\newcommand{\address}[1]{\gdef\@address{#1}}
\newcommand{\email}[1]{\gdef\@email{\url{#1}}}
\newcommand{\homepage}[1]{\gdef\@homepage{\url{#1}}}
\newcommand{\@endstuff}{\vspace{\baselineskip}\noindent\small
	\begin{tabular}{@{}l}
		\@address\\
		\textit{E-mail address:} \@email\\
		\textit{Homepage:} \@homepage
	\end{tabular}}
\author{Vladimir Bobkov}
\email{bobkov@kma.zcu.cz}
\address{
	\textsc{Department of Mathematics and NTIS, Faculty of Applied Sciences},\\ 
	\textsc{University of West Bohemia, Univerzitn\'i 8, 306 14 Plze\v{n}, Czech Republic};\\ 
	\textsc{Institute of Mathematics, Ufa Federal Research Centre, RAS},\\ 
	\textsc{Chernyshevsky str. 112, 450008 Ufa, Russia}}
\date{}
\begin{document}
\maketitle 

\begin{abstract}
	Let $a_{\nu,k}$ be the $k$-th positive zero of the cross-product of Bessel functions $J_\nu(R z) Y_\nu(z) - J_\nu(z) Y_\nu(R z)$, where $\nu\geq 0$ and $R>1$. 
	We derive an initial value problem for a first order differential equation whose solution $\alpha(x)$ characterizes the limit behavior of $a_{\nu,k}$ in the following sense:
	$$
	\lim_{k \to \infty} \frac{a_{kx,k}}{k} = \alpha(x),
	\quad 
	x \geq 0.
	$$
	Moreover, we show that 
	$$
	a_{\nu,k} < \frac{\pi k}{R-1} + \frac{\pi \nu}{2 R}.
	$$
	We use $\alpha(x)$ to obtain an explicit expression of the Pleijel constant for planar annuli and compute some of its values.
	
	\par
	\smallskip
	\noindent {\bf  Keywords}: 
	cross-product of Bessel functions, asymptotic of zeros, upper bound for zeros, Bessel functions, eigenvalues, Pleijel theorem.
	
	\par
	\smallskip
	\noindent {\bf  MSC2010}: 
	33C10, 	%Bessel and Airy functions, cylinder functions, ${}_0F_1$
	33C47,	%Other special orthogonal polynomials and functions
	35P20.	%Asymptotic distribution of eigenvalues and eigenfunctions
\end{abstract}

\section{Introduction and main results}

Consider the cross-product of the $\nu$-th order Bessel functions of the first and second kind
\begin{equation*}\label{eq:crossprod}
f_{\nu,R}(z) := J_\nu(R z) Y_\nu(z) - J_\nu(z) Y_\nu(R z).
\end{equation*}
Hereinafter, we always assume that $\nu, z \geq 0$, and $R>1$. 
However, since $f_{\nu,R}(z)$ is even with respect to $\nu$ and $z$ (see Appendix \ref{sec:appendix}), and $f_{\nu,R}(z) = -f_{\nu, 1/R}(Rz)$, the cases $\nu < 0$, $z < 0$, and $R \in (0,1)$ are also covered.
It is well-known that $f_{\nu,R}$ is oscillating and has infinitely many zeros all of which are simple, see the discussion in \cite{cochran}. 
We will denote by $a_{\nu,k}$ the $k$-th positive zero of $f_{\nu,R}$, $k = 1,2,\dots$

The prominent role of zeros of $f_{\nu,R}$ for applications (see, e.g., \cite{cochran3,kline}) reveals through the fact that $a_{\nu,k}^2$'s constitute the spectrum of the Laplace operator under homogeneous Dirichlet boundary conditions in a planar annulus with the inner radius $1$ and outer radius $R$. Below, we will use this relation and the main result of the present article to obtain a Pleijel-type result on the nodal domains statistics for the Laplace eigenfunctions in annuli \cite{BGS,pleijel}.

However, unlike the widely developed theory of zeros of Bessel functions $J_\nu$, $Y_\nu$, and corresponding cylinder functions (see, e.g., the surveys \cite{E,kerimov,watson} and references therein), significantly less inequalities and asymptotic results are known for zeros of $f_{\nu,R}$. Among known ones, the inequality of McCann \cite[(10)]{mccann} reads as
\begin{equation}\label{eq:mccan}
a_{\nu,k}
\geq 
\sqrt{a_{0,k}^2 + \frac{\nu^2}{R^2}},
\end{equation}
and the following  approximation of $a_{\nu,k}$ for a fixed $\nu$ of McMahon \cite[(24)]{mcmahon} (see also \cite[Theorem, p.~583]{cochran}) states that 
\begin{equation}\label{eq:a0>a}
a_{\nu,k} = \frac{\pi k}{R-1} + O\left(\frac{1}{k}\right).
\end{equation}

The aim of the present article is to characterize the asymptotic behavior of $a_{kx,k}$ as $k \to \infty$ for $x \geq 0$ and obtain an upper bound for $a_{\nu,k}$. To this end, we will use the result of Willis \cite[(8)]{willis} who derived the following formula for the derivative of $a_{\nu,k}$ with respect to $\nu$:
\begin{align}
\notag
\label{eq:willis}
\frac{da_{\nu,k}}{d\nu} 
&= \frac{2a_{\nu,k}(J_\nu^2(a_{\nu,k})+Y_\nu^2(a_{\nu,k}))\int_0^\infty K_0(2Ra_{\nu,k} \sinh t) e^{-2\nu t} \, dt}{(J_\nu^2(a_{\nu,k})+Y_\nu^2(a_{\nu,k})) - (J_\nu^2(Ra_{\nu,k})+Y_\nu^2(Ra_{\nu,k}))} \\[3mm]
&-\frac{2a_{\nu,k} (J_\nu^2(Ra_{\nu,k})+Y_\nu^2(Ra_{\nu,k}))\int_0^\infty K_0(2a_{\nu,k} \sinh t) e^{-2\nu t} \, dt}{(J_\nu^2(a_{\nu,k})+Y_\nu^2(a_{\nu,k})) - (J_\nu^2(Ra_{\nu,k})+Y_\nu^2(Ra_{\nu,k}))},
\end{align}
where $K_0$ is the modified Bessel function of the second kind and zero order.
In fact, Willis assumed that neither $J_{\nu}(a_{\nu,k})=J_\nu(Ra_{\nu,k})=0$ nor $Y_{\nu}(a_{\nu,k})=Y_\nu(Ra_{\nu,k})=0$ for a considered $a_{\nu,k}$. However, checking the derivation of \eqref{eq:willis}, it is easy to see that this assumption is redundant and \eqref{eq:willis} is valid for any $a_{\nu,k}$. For the convenience of the reader, we give corresponding arguments in Appendix \ref{sec:appendix} below.

Note that the denominators in \eqref{eq:willis} are positive since $J_\nu^2(z)+Y_\nu^2(z)$ decreases \cite[p.~446]{watson}.
By working with the numerators in \eqref{eq:willis}, Willis proved that $\frac{da_{\nu,k}}{d\nu} > 0$ for any $\nu > 0$, that is, positive zeros of $f_{\nu,R}$ increase with respect to $\nu \geq 0$; see also \cite[Section 5]{LM} for further results in this direction.

Let us state our main result.
\begin{theorem}\label{thm1}
	Let $R> 1$. Then 
	\begin{equation}\label{eq:converge}
	\lim_{k \to \infty} \frac{a_{kx,k}}{k} = \alpha(x),
	\quad x \geq 0 ,
	\end{equation}
	where $\alpha(x)$ is a unique solution of the initial value problem
	\begin{equation}\label{eq:main_diff}
	\frac{dy}{dx} = 
	\frac{\arccos\left(\frac{x}{Ry}\right) - \mathrm{Ac}\left(\frac{x}{y}\right)}{R\,\sqrt{1 - \left(\frac{x}{Ry}\right)^2} - \mathrm{Sr}\bigg(1-\Big(\frac{x}{y}\Big)^2\bigg)},
	\qquad 
	y(0) = \frac{\pi}{R-1}, 
	\end{equation}
	for $x \geq 0$.
	Here, the functions $\mathrm{Ac}$ and $\mathrm{Sr}$ are zero extensions of $\arccos$ and \textup{square root}, respectively, defined as
	\begin{equation}\label{eq:ArSq}
	\mathrm{Ac}(t) = 
	\left\{
	\begin{aligned}
	\arccos t  \quad \mathrm{for} \quad &|t| \leq 1,\\
	0 \quad \mathrm{for} \quad &|t| > 1,
	\end{aligned}
	\right.
	\qquad 
	\mathrm{Sr}(t) = 
	\left\{
	\begin{aligned}
	\sqrt{t} \quad \mathrm{for} \quad &t \geq 0,\\
	0 \quad \mathrm{for} \quad &t < 0.
	\end{aligned}
	\right.
	\end{equation}
\end{theorem}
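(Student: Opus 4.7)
The plan is to convert Willis' identity \eqref{eq:willis} into an approximate ODE for the rescaled quantity $\alpha_k(x) := a_{kx,k}/k$ and then pass to the limit $k \to \infty$. Differentiating and substituting $\nu = kx$, $a = k\alpha_k(x)$ into \eqref{eq:willis} gives $\alpha_k'(x) = F_k(x,\alpha_k(x))$, where $F_k(x,y)$ denotes the right-hand side of \eqref{eq:willis} evaluated at $\nu = kx$, $a = ky$. The initial value $\alpha_k(0) = a_{0,k}/k$ converges to $\pi/(R-1)$ by McMahon's asymptotic \eqref{eq:a0>a}, matching $y(0)$ in \eqref{eq:main_diff}. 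The proof then reduces to showing $F_k \to F$ locally uniformly and invoking continuous dependence of ODE solutions on data.

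The analytic core is the asymptotics of each ingredient of \eqref{eq:willis}. Debye's large-order expansion gives, uniformly on compacta where $z > \mu > 0$ and $\mu/z$ is bounded away from $1$,
\[
J_\mu^2(z) + Y_\mu^2(z) = \frac{2}{\pi\sqrt{z^2-\mu^2}}\,(1+o(1)),
\]
while for $z < \mu$ the sum grows exponentially in $\mu$. For the $K_0$-integrals, the substitution $t = s/k$ together with $\sinh(s/k)\to s/k$ and a dominated-convergence argument yields
\[
\int_0^\infty K_0(2ky\sinh t)\,e^{-2kxt}\,dt \;\sim\; \frac{1}{2k}\int_0^\infty K_0(yu)\,e^{-xu}\,du \;=\; \frac{1}{2k}\cdot\frac{\arccos(x/y)}{\sqrt{y^2-x^2}}\qquad(x<y),
\]
the last identity being a classical Laplace transform obtained from $K_0(at) = \int_1^\infty(u^2-1)^{-1/2}e^{-atu}\,du$ (with an $\mathrm{arccosh}$ variant when $x>y$). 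Plugging these asymptotics into \eqref{eq:willis}, the $k$-factors cancel against the prefactor $2a = 2k\alpha_k$; in the regime $x<y<Ry$ one recovers exactly the ratio in \eqref{eq:main_diff}, while in $y<x<Ry$ the factor $(J^2+Y^2)(ky)$ dominates both numerator and denominator, so dividing through leaves only the $\arccos(x/(Ry))$-term -- precisely what \eqref{eq:main_diff} prescribes once $\mathrm{Ac}$ and $\mathrm{Sr}$ vanish.

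To lift the convergence of vector fields to convergence of trajectories, I would combine the a priori upper bound $a_{\nu,k} < \pi k/(R-1)+\pi\nu/(2R)$ stated in the introduction with Willis' monotonicity $\partial a_{\nu,k}/\partial\nu>0$: these force $\{\alpha_k\}$ to be uniformly bounded and monotone on compact intervals of $x$, so Helly's selection principle (or Arzelà-Ascoli once equi-Lipschitz bounds are extracted from the uniform bound on $F_k$) supplies subsequential limits. Writing the integral form $\alpha_k(x) = \alpha_k(0) + \int_0^x F_k(s,\alpha_k(s))\,ds$ and using $F_k \to F$ identifies every such limit as a solution of \eqref{eq:main_diff}, and local Lipschitz continuity of $F$ in $y$ away from the transition line $y = x$ gives uniqueness, forcing the full sequence to converge to $\alpha$. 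The main obstacle is making the Bessel and Laplace asymptotics \emph{uniform} across the transitional regimes $z\approx\mu$ and $z\approx R\mu$, where the leading-order formulas degenerate; a related delicate point is that the right-hand side of \eqref{eq:main_diff} is only Hölder of exponent $1/2$ in $y$ at $y = x$, so uniqueness through the transition $\alpha(x)=x$ requires either an Osgood-type refinement or a direct monotonicity argument using $\partial a_{\nu,k}/\partial\nu > 0$.
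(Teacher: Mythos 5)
Your overall strategy coincides with the paper's (rescale via $\alpha_k(x)=a_{kx,k}/k$, read Willis' formula \eqref{eq:willis} as an ODE $\alpha_k'=F_k(x,\alpha_k)$, show $F_k\to F$ locally uniformly separately in the regions $y>x$ and $x/R<y\le x$, and conclude by an ODE convergence theorem plus uniqueness), and your asymptotic computations on both regions do reproduce the right-hand side of \eqref{eq:main_diff}. The genuine gap is the uniqueness of the solution of \eqref{eq:main_diff} through the transition $\alpha(x_0)=x_0$, which you correctly identify as the delicate point but do not resolve: the two remedies you offer both fail or are too vague. An Osgood-type criterion cannot work here, because for a H\"older modulus of exponent $1/2$ one has $\int_0^\varepsilon u^{-1/2}\,du<\infty$ (this is exactly the $y'=\sqrt{|y|}$ situation), and the monotonicity $\partial a_{\nu,k}/\partial\nu>0$ says nothing about uniqueness for the \emph{limit} equation. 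The paper's resolution is a \emph{one-sided} Lipschitz estimate: on the region $y>x$ the function $F(x,\cdot)$ is \emph{decreasing} in $y$, so the square-root singularity enters with the dissipative sign and $\bigl(F(x,y)-F(x,z)\bigr)(y-z)\le 0$ there; combined with a genuine Lipschitz bound on $\partial F/\partial y$ in the region $x/R<y\le x$, this gives $\bigl(F(x,y)-F(x,z)\bigr)(y-z)\le \tfrac{1}{R\varepsilon}(y-z)^2$ for $y,z>\varepsilon$, whence forward uniqueness. Without some such sign argument your identification of all subsequential limits with a single $\alpha$ does not go through.

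Two secondary points. First, you invoke the bound $a_{\nu,k}<\pi k/(R-1)+\pi\nu/(2R)$ of Theorem \ref{thm:upper} to get compactness, but the paper proves that theorem \emph{using} the estimate \eqref{eq:Fk:upper} established inside the proof of Theorem \ref{thm1}; to avoid circularity you should derive the needed a priori bound $F_k(x,y)\le\arccos\bigl(\tfrac{x}{Ry}\bigr)\big/\bigl(R\sqrt{1-(x/(Ry))^2}\bigr)\le\pi/(2R)$ directly from \eqref{eq:willis}, the monotonicity of $K_0$, and the positivity of the denominators (this also gives the equi-Lipschitz bound you want for Arzel\`a--Ascoli). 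Second, the uniformity of the Debye/Laplace asymptotics near the turning point $z\approx\mu$, which you flag as the ``main obstacle,'' is precisely what the paper's route through Nicholson's formula \eqref{eq:J+Y} is designed to avoid: all quantities become $K_0$-integrals whose local uniform limits on the interiors of the two regions are obtained by elementary monotonicity and cut-off arguments, and the crossing of the line $y=x$ by the trajectory is then handled at the level of the ODE (a two-stage application of the convergence theorem plus continuity at $x_0$), never at the level of uniform Bessel asymptotics across the transition. If you keep the raw Debye expansion you must still explain how the trajectory is controlled while $\alpha_k(x)$ passes through the non-uniform zone, and that argument is missing.
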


We will study some basic properties of $\alpha(x)$ in Section \ref{sec:prop} below.

\medskip
Note that the differential equation \eqref{eq:willis} ``generalizes'' the formula \cite[p.~508]{watson}
\begin{equation}\label{eq:derivative_j}
\frac{dj_{\nu,\kappa}}{d\nu} = 2 j_{\nu,\kappa} \int_0^\infty K_0(2j_{\nu,\kappa} \sinh t) e^{-2\nu t} \, dt,
\end{equation}
where $j_{\nu,\kappa}$, $\kappa = k - \frac{\alpha}{\pi}$, $k=1,2,\dots,$ denotes the $k$-th positive zero of the cylinder function
$$
J_\nu(z) \cos \alpha - Y_\nu(z) \sin \alpha,
\quad
0 \leq \alpha < \pi.
$$
Using \eqref{eq:derivative_j}, Elbert and Laforgia \cite{EL1} proved that 
$$
\lim\limits_{\kappa \to \infty} \frac{j_{\kappa x, \kappa}}{\kappa} = \iota(x), \quad x>-1,
$$
where $\iota(x)$ is a unique solution of the initial value problem
\begin{equation*}\label{eq:el}
\frac{dy}{dx} = 
\frac{\arccos\left(\frac{x}{y}\right)}{\sqrt{1 - \left(\frac{x}{y}\right)^2}},
\qquad 
y(0) = \pi.
\end{equation*}
In fact, $\iota(x)$ admits a closed-form representation in terms of a solution of a transcendental equation \cite[(2.1)]{EL1}; see also \cite[Section 1.5]{E}. Our proof of Theorem \ref{thm1} is inspired by the approach of Elbert and Laforgia in combination with the formula \eqref{eq:willis} of Willis. 

\medskip

Theorem \ref{thm1} and the formula \eqref{eq:willis} can be helpful to obtain various bounds for $a_{\nu,k}$. In particular, we provide the following result.
\begin{theorem}\label{thm:upper}
	Let $R>1$, $\nu \geq 0$, and $k=1,2,\dots$
	Then the following upper bound is satisfied:
	\begin{equation}\label{eq:a:upper-bounds}
	a_{\nu,k} < \frac{\pi k}{R-1} + \frac{\pi \nu}{2 R}.
	\end{equation}
\end{theorem}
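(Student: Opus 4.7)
The plan is to prove \eqref{eq:a:upper-bounds} by combining a base estimate at $\nu = 0$ with a uniform derivative estimate and integrating in $\nu$. The two ingredients I would establish are (i) $a_{0,k} \leq \frac{\pi k}{R-1}$ and (ii) $\frac{d a_{\nu,k}}{d\nu} < \frac{\pi}{2R}$ for every $\nu \geq 0$. Integrating (ii) from $0$ to $\nu$ and adding (i) yields $a_{\nu,k} = a_{0,k} + \int_0^\nu \frac{d a_{s,k}}{ds}\,ds < \frac{\pi k}{R-1} + \frac{\pi\nu}{2R}$, which is \eqref{eq:a:upper-bounds}.

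For ingredient (i), the phase-function approach is natural. Writing $J_0(z) = M_0(z)\cos\theta_0(z)$, $Y_0(z) = M_0(z)\sin\theta_0(z)$ with $M_0^2(z) = J_0^2(z) + Y_0^2(z)$, the positive zeros of $f_{0,R}$ coincide with the positive solutions of $\theta_0(Rz) - \theta_0(z) \in \pi\mathbb{Z}$; the Wronskian identity $\theta_0'(z) = \frac{2}{\pi z M_0^2(z)}$ together with the monotonicity of $M_0^2$ ensures that $z \mapsto \theta_0(Rz) - \theta_0(z)$ is strictly increasing. Thus (i) reduces to showing $\theta_0(Rz) - \theta_0(z) \geq (R-1)z$ at $z = \frac{\pi k}{R-1}$, which can be established via the asymptotic expansion of $\theta_0$ at $0$ and $\infty$ combined with a sign analysis of the error function $z\mapsto \theta_0(Rz) - \theta_0(z) - (R-1)z$ (whose leading correction $\frac{R-1}{8Rz}$ is positive for $z > 0$).

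For ingredient (ii), the main tool is Willis's formula \eqref{eq:willis}. Applying the Nicholson identity $J_\nu^2(z) + Y_\nu^2(z) = \frac{8}{\pi^2}\int_0^\infty K_0(2z\sinh t)\cosh(2\nu t)\,dt$ to the denominators of \eqref{eq:willis}, one obtains
$$
\frac{d a_{\nu,k}}{d\nu} = \frac{2 a_{\nu,k} \bigl[N(a_{\nu,k})\, h(R a_{\nu,k}) - N(R a_{\nu,k})\, h(a_{\nu,k})\bigr]}{N(a_{\nu,k}) - N(R a_{\nu,k})},
$$
with $N(z) := \int_0^\infty K_0(2z\sinh t)\cosh(2\nu t)\,dt$ and $h(z) := \int_0^\infty K_0(2z\sinh t) e^{-2\nu t}\,dt$. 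The bound (ii) then amounts to a comparison between $K_0$-weighted integrals at the two scales $a_{\nu,k}$ and $R a_{\nu,k}$, which I would attack using the monotonicity of $z \mapsto z(J_\nu^2(z) + Y_\nu^2(z))$ \cite[p.~446]{watson} together with the decay of $K_0$ at infinity.

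The main obstacle is ingredient (ii). The constant $\frac{\pi}{2R}$ is sharp: a direct inspection of \eqref{eq:main_diff} confirms that $\alpha'(x) < \frac{\pi}{2R}$ for every $x \geq 0$, with the bound approached only in limiting regimes. Consequently, crude term-wise estimates on the numerator and denominator of \eqref{eq:willis} are insufficient, and the subtraction structure in the numerator must be exploited carefully (the negative correction is essential, since each of the two $K_0$-integrals on its own is too large) in order to recover the exact constant $\frac{\pi}{2R}$.
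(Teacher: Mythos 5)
Your overall architecture --- a base estimate $a_{0,k}\leq \frac{\pi k}{R-1}$ plus the uniform slope bound $\frac{da_{\nu,k}}{d\nu}<\frac{\pi}{2R}$, integrated in $\nu$ --- is sound and is in essence the paper's own strategy: the paper proves $a_{0,k}<\frac{\pi k}{R-1}$ (Proposition \ref{prop:a0-upper-bound}, by a variational argument with explicit test functions rather than your phase-function route, though your route is also viable --- to close it cleanly, write $\theta_0(Rz)-\theta_0(z)-(R-1)z=\int_z^{Rz}\bigl(\theta_0'(t)-1\bigr)\,dt>0$, which follows from $t\,(J_0^2(t)+Y_0^2(t))<\frac{2}{\pi}$, instead of a sign analysis of asymptotic expansions), and then bounds the slope field of $\alpha_k(x)=a_{kx,k}/k$ by $\frac{\arccos\left(\frac{x}{Ry}\right)}{R\sqrt{1-(x/(Ry))^2}}\leq\frac{\pi}{2R}$, which is your ingredient (ii) in ODE-comparison form.

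The genuine gap is that ingredient (ii) is announced but never proved: you only say you ``would attack'' it, and you flag it as the main obstacle on the grounds that term-wise estimates on \eqref{eq:willis} are insufficient and the cancellation in the numerator must be exploited delicately. That diagnosis is incorrect, and accepting it would leave you stuck on a step that is actually elementary. Write \eqref{eq:willis} as $\frac{da_{\nu,k}}{d\nu}=\frac{2a\,[A\,h(Ra)-B\,h(a)]}{A-B}$ with $a=a_{\nu,k}$, $A=J_\nu^2(a)+Y_\nu^2(a)$, $B=J_\nu^2(Ra)+Y_\nu^2(Ra)$, and $h(z)=\int_0^\infty K_0(2z\sinh t)e^{-2\nu t}\,dt$. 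Since $K_0$ is decreasing, $h(a)>h(Ra)$, so the numerator is $<(A-B)\,h(Ra)$; since $A-B>0$, this gives $\frac{da_{\nu,k}}{d\nu}<2a\,h(Ra)$. Then the substitution $u=2Rat$ together with $\sinh t>t$ and the monotonicity of $K_0$ yields
\begin{equation*}
2a\,h(Ra)<\frac{1}{R}\int_0^\infty K_0(u)\,e^{-\frac{\nu}{Ra}u}\,du
=\frac{\arccos\left(\frac{\nu}{Ra}\right)}{R\sqrt{1-\left(\frac{\nu}{Ra}\right)^2}}\leq\frac{\pi}{2R},
\end{equation*}
using \eqref{eq:watson} (legitimate since $\frac{\nu}{Ra}<1$ by \eqref{eq:mccan}) and the fact that $\frac{\arccos s}{\sqrt{1-s^2}}$ decreases from $\frac{\pi}{2}$ on $[0,1)$. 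This is precisely the paper's inequality \eqref{eq:Fk:upper}; no exploitation of the subtracted term beyond the trivial replacement $h(a)\mapsto h(Ra)$ is needed. So your proposal is repairable, but as written the decisive estimate is missing and your stated reason for its difficulty is a false lead.
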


\medskip

Finally, we use Theorem \ref{thm1} to obtain a Pleijel-type result on the nodal domains statistics for the Laplace eigenfunctions in annuli. 
Denote by $\{\lambda_k\}$ the increasing sequence of eigenvalues of the Dirichlet eigenvalue problem
\begin{equation}\label{eq:dirichlet}
\left\{
\begin{aligned}
-\Delta u &= \lambda u &&{\rm in}\ \Omega, \\
u&=0 &&{\rm on }\ \partial \Omega,
\end{aligned}
\right.
\end{equation}
where $\Omega \subset \mathbb{R}^2$ is a bounded domain.
Let $\varphi_k$ be an eigenfunction associated with $\lambda_k$, and denote by $\mu(\varphi_k)$ the number of nodal domains of $\varphi_k$, that is, the number of connected components of the set $\Omega \setminus \overline{\{x\in\Omega: \varphi_k(x)=0\}}$. The famous nodal domain theorem of Courant asserts that $\mu(\varphi_k) \leq k$ for any $k$. Pleijel \cite{pleijel} obtained the following refinement of this fact:
\begin{equation*}\label{eq:pleijel}
Pl(\Omega) := \limsup_{k \to \infty} \frac{\mu(\varphi_k)}{k} \leq \frac{4}{j_{0,1}^2} = 0.69166\ldots
\end{equation*}
Here, $Pl(\Omega)$ is called \textit{Pleijel constant of} $\Omega$.
We refer the reader to the surveys \cite{BHHO,Helff} for the overview of results in this direction.

In connection with the conjecture of Polterovich \cite[Remark 2.2]{pol}, there is an interesting question to determine the exact value of the Pleijel constant $Pl(\Omega)$ for particular domains, see \cite[Section 6.1]{BHHO}.
In the article \cite{bobkov}, we investigated the values and expressions of $Pl(\Omega)$ for some symmetric domains like a disk, annuli (rings), and their sectors. 
In particular, under the notation 
$A_R := \{x \in \mathbb{R}^2:~ 1 < |x| < R\}$, $R > 1$, it was proved in \cite[Proposition 1.6]{bobkov} that 
\begin{equation}\label{eq:PLBa}
Pl(A_{R}) = \frac{8}{R^2-1} \, \sup_{x>0} \left\{x \, \limsup_{k \to \infty} \frac{k^2}{a_{kx,k}^2}\right\},
\end{equation}
provided any sufficiently large eigenvalue $\lambda_k$ of \eqref{eq:dirichlet} on $A_R$ has the multiplicity at most two. In the case of a higher multiplicity (which can possibly occur, see \cite[Lemma 1.9]{bobkov}), $Pl(A_R)$ is estimated by the right-hand side of \eqref{eq:PLBa} from below. 
Combining these facts with Theorem~\ref{thm1}, we obtain the following result.
\begin{prop}\label{prop:pleijel}
	Let $R>1$. Then 
	\begin{equation}\label{eq:PLBa1}
	Pl(A_{R}) \geq \frac{8}{R^2-1} \, \sup_{x>0} \left\{\frac{x}{\alpha(x)^2}\right\},
	\end{equation}	
	where $\alpha(x)$ is the solution of \eqref{eq:main_diff} defined in Theorem \ref{thm1} (see also Remark \ref{rem:analit}).
	The equality in \eqref{eq:PLBa1} is satisfied if any sufficiently large eigenvalue $\lambda_k$ of \eqref{eq:dirichlet} on $A_R$ has the multiplicity at most two.
\end{prop}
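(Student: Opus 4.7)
The plan is to combine the asymptotic relation from Theorem \ref{thm1} with the formula \eqref{eq:PLBa} recalled from \cite[Proposition 1.6]{bobkov}. First I would observe that \eqref{eq:PLBa} has a two-tier character: in general the right-hand side is a lower bound for $Pl(A_R)$, while the inequality becomes an equality under the multiplicity-at-most-two hypothesis. This structure is exactly what produces the two-tier conclusion of Proposition \ref{prop:pleijel}, so the only substantive task is to evaluate the inner $\limsup$ appearing in \eqref{eq:PLBa}.

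For each fixed $x > 0$, Theorem \ref{thm1} asserts that $a_{kx,k}/k \to \alpha(x)$ as $k \to \infty$; that is, the limit exists, not merely the $\limsup$. Provided $\alpha(x) > 0$, one may pass to reciprocals and squares to obtain
\begin{equation*}
\limsup_{k \to \infty} \frac{k^2}{a_{kx,k}^2} = \lim_{k \to \infty} \frac{k^2}{a_{kx,k}^2} = \frac{1}{\alpha(x)^2}.
\end{equation*}
Substituting this identity into \eqref{eq:PLBa} then yields \eqref{eq:PLBa1} together with the stated criterion for equality, and completes the proof.

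The only genuine verification required is that $\alpha(x) > 0$ for every $x \geq 0$, which is needed in order for the reciprocal above to be well defined. This is clear from the basic properties of $\alpha$ that will be established in Section \ref{sec:prop}: the initial condition gives $\alpha(0) = \pi/(R-1) > 0$, and the right-hand side of \eqref{eq:main_diff} is nonnegative wherever it is defined with $y > 0$, so $\alpha$ is nondecreasing and hence remains strictly positive. Apart from this routine check and the citation of \eqref{eq:PLBa}, the proof is essentially a direct substitution of Theorem \ref{thm1} into a pre-existing formula, and I do not anticipate any real obstacle.
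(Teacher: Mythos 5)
Your proposal is correct and coincides with the paper's own (essentially one-line) argument: since Theorem \ref{thm1} gives $a_{kx,k}/k \to \alpha(x) > 0$, the $\limsup$ in \eqref{eq:PLBa} is the limit $1/\alpha(x)^2$, and the two-tier conclusion is inherited directly from the two-tier statement of \eqref{eq:PLBa} in \cite[Proposition 1.6]{bobkov}. Your extra check that $\alpha(x)>0$ is the right (if routine) point to verify.
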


\medskip
The article is structured as follows. In Section \ref{sec:proof}, we prove Theorem \ref{thm1}. Section \ref{sec:prop} is devoted to the study of properties of $\alpha(x)$. In Section \ref{sec:upper-bound}, we prove Theorem \ref{thm:upper}. In Section \ref{sec:pleijel}, we provide some numerical results concerning the value of $Pl(A_R)$ for several $R>1$. Finally, for the convenience of the reader, we prove some basic properties of $f_{\nu,R}$ in Appendix \ref{sec:appendix}.

\section{Proof of Theorem \ref{thm1}}\label{sec:proof}

	Let us denote the right-hand side of the differential equation in \eqref{eq:main_diff} as $F$, that is, 
	\begin{equation*}\label{eq:F}
	F(x,y) = 
	\frac{\arccos\left(\frac{x}{Ry}\right) - \mathrm{Ac}\left(\frac{x}{y}\right)}{R\,\sqrt{1 - \left(\frac{x}{Ry}\right)^2} - \mathrm{Sr}\bigg(1-\Big(\frac{x}{y}\Big)^2\bigg)}.
	\end{equation*}
	Let us also introduce the set
	$$
	E = \left\{(x,y) \in \mathbb{R}^2:~ x \geq 0,~ y > \frac{x}{R} \right\},
	$$
	and split $E$ as $E = E_1 \cup E_2$, where
	$$
	E_1 = \left\{(x,y) \in \mathbb{R}^2:~ x > 0,~ \frac{x}{R}< y \leq x \right\},
	\quad 
	E_2 = \left\{(x,y) \in \mathbb{R}^2:~ x \geq 0,~ y > x \right\}.
	$$

	We start by showing that the initial value problem \eqref{eq:main_diff} possesses a unique solution $\alpha(x)$ for $x \geq 0$. 
	Noting that the functions $\mathrm{Ac}$ and $\mathrm{Sr}$ defined in \eqref{eq:ArSq} are continuous on $(-1,\infty)$ and $\mathbb{R}$, respectively, we see that $F$ is continuous on $E$. This fact yields the existence of a solution $\alpha(x)$ of \eqref{eq:main_diff} on a maximal interval $I = [0,b)$ for some $b \in (0, \infty]$. 
	Moreover, since $R>1$, $F$ is positive on $E$ for $x>0$, which implies that $\alpha(x)$ increases and hence $\alpha(x) > \alpha(0) = \frac{\pi}{R-1}$. 
	Using this fact and the inequality
	\begin{equation}\label{eq:F>1R}
	F(x,y) = \frac{\arccos\left(\frac{x}{Ry}\right)}{R\sqrt{1 - \left(\frac{x}{Ry}\right)^2}} 
	>
	\frac{1}{R}
	\quad \text{for } 
	(x,y) \in E_1,
	\end{equation}
	we see that $\alpha(x)$ cannot meet the boundary $y=\frac{x}{R}$ of $E$. 
	Let us now suppose that $\alpha(x) \to \infty$ as $x$ tends to some finite $x_0>0$. However, since $F(x,\alpha(x))$ stays finite, $\alpha(x)$ also stays finite on finite $x$-intervals, which leads to a contradiction. Thus, we conclude that $b=\infty$.
	To prove that $\alpha(x)$ is the unique solution of \eqref{eq:main_diff} for $x \geq 0$, we show that $F$ is one-sided Lipschitz with respect to $y$ provided $(x,y) \in E$ and $y > \varepsilon>0$ for some $\varepsilon>0$. More precisely, let us show that for any $\varepsilon >0$ and all $(x,y), (x,z) \in E$ with $y,z > \varepsilon$ there holds
	\begin{equation}\label{eq:lipschitz}
	\left(F(x,y)-F(x,z)\right)(y-z) \leq \frac{1}{R\varepsilon} (y-z)^2.
	\end{equation}
	Take any $\varepsilon>0$ and assume, without loss of generality, that $y<z$ for a pair $(x,y), (x,z) \in E$ with $y,z > \varepsilon$. 
	First, recalling \eqref{eq:F>1R}, we obtain for any $(x,t) \in E_1$ with $t>\varepsilon$ that
	\begin{align*}
	\left.\frac{\partial F(x,s)}{\partial s}\right|_{s=t} = 
	\frac{x}{R^2 t^2 - x^2} \left(1 - \frac{x F(x,t)}{t}\right)
	\leq \frac{x}{Rt(Rt+x)} \leq \frac{1}{R\varepsilon}.
	\end{align*}	
	Thus, if $(x,y), (x,z) \in E_1$, then, applying the mean value theorem, we deduce that $F$ satisfies \eqref{eq:lipschitz}.
	If $(x,y), (x,z) \in E_2$, then $F(x,y)>F(x,z)$, and hence \eqref{eq:lipschitz} is automatically satisfied. 
	The validity of \eqref{eq:lipschitz} in the remaining case $(x,y) \in E_1$, $(x,z) \in E_2$ follows by combining the previous two cases. 
	Indeed, by the mean value theorem there exists $t_0 \in [y,x]$ such that
	\begin{align*}
	&\left(F(x,y)-F(x,z)\right)(y-z)
	=
	\left(F(x,y)-F(x,x)+F(x,x)-F(x,z)\right)(y-z)\\
	&\leq
	\left(F(x,y)-F(x,x)\right)(y-z)
	= 
	\left.\frac{\partial F(x,s)}{\partial s}\right|_{s=t_0} (y-z)(y-x)
	\leq \frac{1}{R\varepsilon} (y-z)^2.
	\end{align*}
	Therefore, the standard uniqueness theorem (see, e.g.,  \cite[Chapter III, Theorem 6.1 and Exercise 6.8]{hartman}) implies that $\alpha(x)$ is the unique solution of \eqref{eq:main_diff}.

	Now we prove the convergence result \eqref{eq:converge}.
	Denote
	\begin{equation}\label{eq:bk}
	\alpha_k(x) = \frac{a_{kx,k}}{k}
	\quad \text{for }
	x \geq 0,
	~k =1,2,\dots
	\end{equation}
	Since $\left.\frac{d\alpha_{k}(x)}{dx} = \frac{da_{\nu,k}}{d\nu}\right|_{\nu=kx}$, we see from Willis' formula \eqref{eq:willis} that $\alpha_k(x)$ is the solution of the initial value problem
	\begin{equation}\label{eq:bk:diff}
	\frac{d y}{dx} = F_k(x,y), 
	\quad 
	y(0) = \alpha_k(0),
	\end{equation}
	where
	\begin{align}
	\notag
	F_k(x,y) 
	&= \frac{2 k y(J_{kx}^2(k y)+Y_{kx}^2(k y))\, 2 k y\int_0^\infty K_0(2Rk y \sinh t) e^{-2 kx t} \, dt}{2 k y (J_{kx}^2(k y)+Y_{kx}^2(k y)) - 2 k y (J_{kx}^2(Rk y)+Y_{kx}^2(Rk y))} \\[3mm]
	\label{eq:dbdx}
	&-\frac{2k y (J_{kx}^2(Rk y)+Y_{kx}^2(Rk y))\, 2 k y \int_0^\infty K_0(2k y \sinh t) e^{-2 kx t} \, dt}{2 k y(J_{kx}^2(k y)+Y_{kx}^2(k y)) - 2 k y(J_{kx}^2(Rk y)+Y_{kx}^2(Rk y))}.
	\end{align}	
	(The additional multipliers $2ky$ in \eqref{eq:dbdx} are included for the simplicity of further usage.)
	Moreover, $\alpha_k(x)$ is the unique solution of \eqref{eq:bk:diff} due to the continuous differentiability of $F_k$ with respect to $y>0$. 
	Below, we will expand $F_k$ further via Nicholson's formula	\cite[(1), p.~444]{watson}
	\begin{align}
	\notag
	J_{kx}^2(Rky) &+ Y_{kx}^2(Rky) =
	\frac{8}{\pi^2} \int_0^\infty K_0(2 Rky \sinh t) \cosh(2kx t) \, dt \\
	\label{eq:J+Y}
	&= \frac{4}{\pi^2} \int_0^\infty K_0(2 Rky \sinh t) e^{-2kx t} \, dt 
	+
	\frac{4}{\pi^2} \int_0^\infty K_0(2 Rky \sinh t) e^{2kx t} \, dt.
	\end{align}
	Note that the lower bound \eqref{eq:mccan} implies that $(x, \alpha_k(x)) \in E$ for any $x \geq 0$ and $k$. That is, we can assume that each $F_k$ is defined on $E$. 
	
	We are going to show that $F_k$ converges to $F$ uniformly on every compact subset of $E_1$ and $E_2$ as $k \to \infty$. Then \cite[Chapter II, Theorem 3.2]{hartman} in combination with the uniqueness of $\alpha(x)$ obtained above will imply that $\alpha_k(x) \to \alpha(x)$ for each $x \geq 0$, which is equivalent to the desired result \eqref{eq:converge}.
	To this end, we will show the local uniform convergence and local boundedness of integrals in \eqref{eq:dbdx} and \eqref{eq:J+Y}.
	
	We start by performing the following trivial change of variables:
	\begin{equation}\label{eq:K0-K0}
	2ky\int_0^\infty K_0(2Rky \sinh t) e^{-2 kx t} \, dt 
	= 
	\frac{1}{R}\int_0^\infty K_0\left(u \, \frac{\sinh\left(\frac{u}{2Rky}\right)}{\frac{u}{2Rky}}\right) e^{-\frac{x}{Ry} u} \, du.
	\end{equation}
	Since $\sinh t > t$ for $t>0$, $\frac{\sinh t}{t} \to 1$ as $t \to 0$, and $K_0(t)$ is decreasing and integrable, we obtain that
	\begin{equation}\label{eq:k0r-upper}
	2ky\int_0^\infty K_0(2Rky \sinh t) e^{-2 kx t} \, dt 
	< \frac{1}{R}\int_0^\infty K_0(u) e^{-\frac{x}{R y} u} \, du \leq C < \infty
	\end{equation}	
	for all $(x,y) \in E$ and $k$, where $C>0$ is a uniform constant.
	To obtain a lower bound for \eqref{eq:K0-K0}, we use a Cusa-Huygens-type inequality from \cite{NS} which reads as
	$$
	\frac{\sinh\left(\frac{u}{2Rky}\right)}{\frac{u}{2Rky}} < \frac{\cosh\left(\frac{u}{2Rky}\right) + 2}{3}.
	$$
	Let us introduce $u_k > 0$ such that 
	$$
	\frac{\cosh\left(\frac{u_k}{2Rky}\right) + 2}{3} = 1 + \frac{1}{k}.
	$$
	Using the series expansion of $\mathrm{arccosh}\, t$, we get
	$$
	u_k = 2Rky \,\mathrm{arccosh}\left(1+\frac{3}{k}\right) = y \, O(\sqrt{k}) \to \infty
	\quad 
	\text{as }
	k \to \infty.
	$$
	Therefore, using the monotonicity of $K_0(t)$ and $\cosh t$ for $t>0$, we deduce that
	\begin{equation}\label{eq:K0>K0}
	\int_0^\infty K_0\left(u \, \frac{\sinh\left(\frac{u}{2Rky}\right)}{\frac{u}{2Rky}}\right) e^{-\frac{x}{Ry} u} \, du \geq 
	\int_0^{u_k} K_0\left(u \left(1+\frac{1}{k}\right)\right) e^{-\frac{x}{Ry} u} \, du.
	\end{equation}
	Since by \cite[(1), p.~202]{watson}, 
	\begin{equation}\label{eq:K0est}
	K_0(t) \sim \sqrt\frac{\pi}{2t} \, e^{-t} 
	\quad
	\text{as } t \to \infty,
	\end{equation}
	we see that
	\begin{equation*}
	\int_{u_k}^\infty K_0\left(u \left(1+\frac{1}{k}\right)\right) e^{-\frac{x}{Ry} u} \, du \to 0
	\quad
	\text{as } k \to \infty
	\end{equation*}
	locally uniformly on $E$.
	(Note that the locality comes from the fact that $u_k$ depends on $y$.)
	Thus, 
	$$
	\int_0^{u_k} K_0\left(u \left(1+\frac{1}{k}\right)\right) e^{-\frac{x}{Ry} u} \, du 
	\to 
	\int_0^\infty K_0(u) e^{-\frac{x}{R y} u} \, du 
	\quad
	\text{as } k \to \infty
	$$
	and hence
	\begin{equation}\label{eq:conv1}
	2ky\int_0^\infty K_0(2Rky \sinh t) e^{-2 kx t} \, dt 
	\to \frac{1}{R}\int_0^\infty K_0(u) e^{-\frac{x}{R y} u} \, du 
	\quad
	\text{as } k \to \infty
	\end{equation}
	locally uniformly on $E$, and the left-hand side of \eqref{eq:conv1} is bounded on $E$, see \eqref{eq:k0r-upper}. 
	Clearly, the same results hold true for the integral $2ky\int_0^\infty K_0(2ky \sinh t) e^{-2 kx t} \, dt$.
	
	Analogously, recalling that $\frac{x}{Ry} < 1$ for $(x,y) \in E$, 
	we deduce from \eqref{eq:K0est} that 
	\begin{equation*}
	\int_{u_k}^\infty K_0\left(u \left(1+\frac{1}{k}\right)\right) e^{\frac{x}{Ry} u} \, du \to 0
	\quad
	\text{as } k \to \infty
	\end{equation*}
	and hence
	\begin{equation}\label{eq:conv3}
	2ky\int_0^\infty K_0(2Rky \sinh t) e^{2 kx t} \, dt 
	\to \frac{1}{R}\int_0^\infty K_0(u) e^{\frac{x}{R y} u} \, du 
	\quad
	\text{as } k \to \infty
	\end{equation}
	locally uniformly on $E$. 
	Moreover, the right-hand side of \eqref{eq:conv3} is locally bounded on $E$, and the same holds true for the left-hand side, cf.\ \eqref{eq:k0r-upper}.
	
	In its turn, the treatment of the remaining integral $2ky\int_0^\infty K_0(2ky \sinh t) e^{2 kx t} \, dt$ already depends on the choice of $E_1$ or $E_2$.
	Indeed, since $\frac{x}{y}<1$ on $E_2$, we see, as above, that 
	\begin{equation}\label{eq:conv41}
	2ky\int_0^\infty K_0(2ky \sinh t) e^{2 kx t} \, dt 
	\to \int_0^\infty K_0(u) e^{\frac{x}{y} u} \, du 
	\quad
	\text{as } k \to \infty
	\end{equation}
	locally uniformly on $E_2$, and the right-hand side (and hence the left-hand side) of \eqref{eq:conv41} is locally bounded away from zero and infinity on $E_2$.
	On the other hand, using \eqref{eq:K0est}, we deduce as in \eqref{eq:K0>K0} that 
	\begin{equation}\label{eq:conv42}
	2ky\int_0^\infty K_0(2ky \sinh t) e^{2 kx t} \, dt 
	\geq
	\int_0^{u_k} K_0\left(u \left(1+\frac{1}{k}\right)\right) e^{\frac{x}{y} u} \, du
	\to \infty
	\end{equation}
	as $k \to \infty$ locally uniformly on $E_1$. 
	
	Let us now rewrite $F_k$ as $F_k = \frac{G_k}{H_k}$, where
	\begin{align*}
	G_k(x,y) 
	&:= 2 k y \int_0^\infty K_0(2Rk y \sinh t) e^{-2 kx t} \, dt \\
	&- \frac{2 k y \int_0^\infty K_0(2Rk y \sinh t) e^{2 kx t} \, dt \cdot 2 k y \int_0^\infty K_0(2k y \sinh t) e^{-2 kx t} \, dt}{2 k y \int_0^\infty K_0(2k y \sinh t) e^{2 kx t} \, dt}
	\end{align*}
	and
	\begin{align*}
	H_k(x,y)
	&:= 1 + \frac{2 k y \int_0^\infty K_0(2k y \sinh t) e^{-2 kx t} \, dt}{2 k y \int_0^\infty K_0(2k y \sinh t) e^{2 kx t} \, dt} 
	\\
	&- \frac{2 k y \int_0^\infty K_0(2Rk y \sinh t) e^{2 kx t} \, dt}{2 k y \int_0^\infty K_0(2k y \sinh t) e^{2 kx t} \, dt}
	- \frac{2 k y \int_0^\infty K_0(2Rk y \sinh t) e^{-2 kx t} \, dt}{2 k y \int_0^\infty K_0(2k y \sinh t) e^{2 kx t} \, dt}.
	\end{align*}
	Recall that if some functional sequences $\{g_k\}$ and $\{h_k\}$ are locally bounded and locally uniformly convergent to $g$ and $h$, respectively, then $g_k h_k \to g h$ locally uniformly. Using this fact and the local uniform convergence and local boundedness of the integrals in \eqref{eq:conv1}, \eqref{eq:conv3}, and \eqref{eq:conv41} or \eqref{eq:conv42}, we see that
	$$
	F_k(x,y) \to \frac{1}{R}\int_0^\infty K_0(u) e^{-\frac{x}{R y} u} \, du
	\quad
	\text{as } k \to \infty
	$$
	locally uniformly on $E_1$, and $F_k \to \frac{G}{H}$ locally uniformly on $E_2$, where
	\begin{align*}
	G(x,y) 
	:= \frac{1}{R}\int_0^\infty K_0(u) e^{-\frac{x}{R y} u} \, du
	- \frac{\frac{1}{R}\int_0^\infty K_0(u) e^{\frac{x}{R y} u} \, du \cdot \int_0^\infty K_0(u) e^{-\frac{x}{y} u} \, du}{\int_0^\infty K_0(u) e^{\frac{x}{y} u} \, du}
	\end{align*}
	and
	\begin{align*}
	H(x,y)
	:= 1 
	+ \frac{\int_0^\infty K_0(u) e^{-\frac{x}{y} u} \, du}{\int_0^\infty K_0(u) e^{\frac{x}{y} u} \, du} 
	- \frac{\frac{1}{R}\int_0^\infty K_0(u) e^{\frac{x}{Ry} u} \, du}{\int_0^\infty K_0(u) e^{\frac{x}{y} u} \, du}
	- \frac{\frac{1}{R}\int_0^\infty K_0(u) e^{-\frac{x}{Ry} u} \, du}{\int_0^\infty K_0(u) e^{\frac{x}{y} u} \, du}.
	\end{align*}
	Simplifying the above expressions via the formula \cite[p.~388]{watson}
	\begin{equation}\label{eq:watson}
	\int_0^\infty K_0(u) e^{-a u} \, du = \frac{\arccos a}{\sqrt{1-a^2}},
	\quad 
	|a| < 1,
	\end{equation}	
	we conclude that $F_k$ converges locally uniformly on $E_1$ and $E_2$ to $F$.
	
	\smallskip
	As an auxiliary fact which will be used also in Section \ref{sec:prop} below, let us note that the following inequality is satisfied:
	\begin{equation}\label{eq:Fk:upper}
	F_k(x,y) < \frac{\arccos\left(\frac{x}{Ry}\right)}{R\sqrt{1 - \left(\frac{x}{Ry}\right)^2}},
	\quad (x,y) \in E,
	~k=1,2,\ldots
	\end{equation}
	To obtain \eqref{eq:Fk:upper}, it is enough to recall that $K_0(t)$ decreases for $t>0$, which implies that 
	\begin{equation*}\label{eq:K0R<K0}
	\int_0^\infty K_0(2k y \sinh t) e^{-2 kx t} \, dt 
	>
	\int_0^\infty K_0(2Rk y \sinh t) e^{-2 kx t} \, dt
	\end{equation*}
	for any $y>0$ and $k=1,2,\ldots$ 
	Recalling also that the denominators in \eqref{eq:dbdx} are positive, we deduce from \eqref{eq:dbdx} that
	$$
	F_k(x,y) < 2ky \int_0^\infty K_0(2 R k y \sinh t) e^{-2 kx t} \, dt.
	$$
	Therefore, using \eqref{eq:k0r-upper} and \eqref{eq:watson}, we obtain \eqref{eq:Fk:upper}. 
	\smallskip
	
	Finally, let us show that the obtained local uniform convergence of $F_k$ to $F$ implies the convergence result \eqref{eq:converge}. 
	Note that $(0,\alpha_k(0)) \in E_2$ for all $k$, and  \eqref{eq:a0>a} for $\nu=0$ reads as $\alpha_k(0) \to \alpha(0)$.
	Therefore, applying \cite[Chapter II, Theorem 3.2]{hartman} (with minor modifications) on $E_2$ and recalling that $\alpha(x)$ is the unique solution of \eqref{eq:main_diff}, we deduce that $\alpha_k(x) \to \alpha(x)$ for any $x \in [0,x_0)$, where $x_0$ defines the right maximal interval of applicability of \cite[Chapter II, Theorem 3.2]{hartman}. If $x_0 = \infty$, then we are done. If $x_0 < \infty$, then the only possibility is that $x_0 = \alpha(x_0)$, that is, $(x_0, \alpha(x_0)) \in \partial E_2 \cap \partial E_1$. 
	Since 
	$\arccos t = \arctan \left(\frac{\sqrt{1-t^2}}{t}\right) < \frac{\sqrt{1-t^2}}{t}$ for $t \in (0,1)$, we get
	\begin{equation}\label{eq:F>1R2}
	F(x,y) = \frac{\arccos\left(\frac{x}{Ry}\right)}{R\sqrt{1 - \left(\frac{x}{Ry}\right)^2}} 
	< \frac{y}{x} \leq 1
	\quad \text{for } 
	(x,y) \in E_1.
	\end{equation}
	Thus, we see that $\alpha(x) < x$ and hence $(x, \alpha(x)) \in E_1$ for all $x>x_0$.
	Moreover, in view of the inequalities \eqref{eq:Fk:upper} and \eqref{eq:F>1R2}, $\alpha_k(x) < x$ for any $x>x_0$ and sufficiently large $k$. 
	Fixing $x_1>x_0$ and applying \cite[Chapter II, Theorem 3.2]{hartman} on $E_1$, we can extract a subsequence $\{\alpha_{k_n}(x)\}$ which converges locally uniformly on the maximal interval $(x_0, x_2)$ to a solution $\tilde{\alpha}(x)$ of the differential equation in  \eqref{eq:main_diff} with the initial value $\tilde{\alpha}(x_1) = \lim\limits_{n\to \infty}\alpha_{k_n}(x_1)$. By continuity, $\alpha(x_0) = \tilde{\alpha}(x_0)$, and the uniqueness of $\alpha(x)$ yields $\alpha(x) = \tilde{\alpha}(x)$ for all $x \in (x_0,x_2)$. Moreover, $x_2=\infty$, as it follows from \eqref{eq:F>1R2} and \eqref{eq:F>1R}. 
	Thus, we conclude that $\alpha_k(x) \to \alpha(x)$ for any $x \geq 0$.	
\qed

\section{Properties of \texorpdfstring{$\alpha(x)$}{a(x)}}\label{sec:prop}

We start with auxiliary results which will be used to obtain upper bounds for $\alpha(x)$.
We use the notations $F_k$, $F$, and $E$ from Section \ref{sec:proof}. 
Let us note that \eqref{eq:Fk:upper} and the local uniform convergence of $F_k$ to $F$ proved in Section \ref{sec:proof} imply
\begin{equation}\label{eq:Fk:upper2}
F(x,y) \leq \frac{\arccos\left(\frac{x}{Ry}\right)}{R\sqrt{1 - \left(\frac{x}{Ry}\right)^2}},
\quad (x,y) \in E.
\end{equation}

Let now $\tilde{\iota}(x)$ be a unique solution of the initial value problem
\begin{equation}\label{eq:ivp}
\frac{dy}{dx} = 
\frac{\arccos\left(\frac{x}{y}\right)}{\sqrt{1 - \left(\frac{x}{y}\right)^2}},
\qquad 
y(0) = \frac{\pi R}{R-1},
\end{equation}
for $x > 0$. 
The uniqueness of $\tilde{\iota}(x)$ follows from the fact that the right-hand side of the differential equation in \eqref{eq:ivp} is Lipschitz provided $y > \varepsilon$ for some $\varepsilon > 0$. 
Note that, $\tilde{\iota}(x)$ can be expressed as a solution of the following system (see also \cite[(2.2) and (2.10)]{EL1}):
$$
\tilde{\iota}(x) = \frac{x}{\sin \alpha},
\qquad
\frac{\sin \alpha}{\cos \alpha - (\pi/2-\alpha) \sin \alpha} = \frac{(R-1)x}{\pi R},
\qquad
\alpha \in \left(0, \frac{\pi}{2}\right),
$$
where the second equation can be equivalently written as
\begin{equation}\label{eq:alpha}
\cot \alpha - \left(\frac{\pi}{2}-\alpha\right) = \frac{\pi R}{(R-1)x}.
\end{equation}
The left-hand side of \eqref{eq:alpha} tends to $+\infty$ as $\alpha \to 0+$, tends to $0$ as $\alpha \to \pi/2$, and decreases on $(0, \pi/2)$. Thus, for any $x > 0$ there exists a unique $\alpha \in (0,\pi/2)$ satisfying \eqref{eq:alpha}, and hence $\tilde{\iota}(x)$ is also determined. Moreover, in view of the monotonicity of the right-hand side of \eqref{eq:alpha}, we conclude that $\alpha = \alpha(x)$ is increasing, which yields $\alpha'(x) \geq 0$.

It is not hard to see that $\tilde{\iota}(x)$ is increasing and concave. Indeed, substituting $\tilde{\iota}(x) = \frac{x}{\sin \alpha(x)}$ into \eqref{eq:ivp}, we get
$$
\frac{d\tilde{\iota}(x)}{dx} = \frac{\pi/2-\alpha(x)}{\cos \alpha(x)} > 0.
$$
Taking the second derivative of $\tilde{\iota}(x)$ and recalling that $\alpha'(x) \geq 0$, we easily conclude that $\tilde{\iota}''(x)$ is negative, and hence $\tilde{\iota}(x)$ is concave.
By the concavity, 
\begin{equation}\label{eq:i:upper}
\tilde{\iota}(x) \leq \tilde{\iota}(0) + \tilde{\iota}'(0) x 
= 
\frac{\pi R}{R-1} + \frac{\pi x}{2}.
\end{equation}

Now we are ready to collect some basic properties of $\alpha(x)$. 
\begin{prop}\label{prop:prop}
	Let $R>1$ and let $\alpha(x)$ be the solution of \eqref{eq:main_diff}. Then $\alpha(x)$ is increasing and
	\begin{equation}\label{eq:b:estimate}
	\sqrt{\frac{\pi^2}{(R-1)^2} + \frac{x^2}{R^2}} < \alpha(x) < \frac{\tilde{\iota}(x)}{R} \leq \frac{\pi}{R-1} + \frac{\pi x}{2 R},
	\quad
	x>0,
	\end{equation}
	where $\tilde{\iota}(x)$ is the solution of \eqref{eq:ivp}. 
\end{prop}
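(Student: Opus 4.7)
The argument has three components---monotonicity of $\alpha$ and the two strict inequalities in \eqref{eq:b:estimate}---built on top of \eqref{eq:i:upper}, which is already established above. Monotonicity is immediate: $F > 0$ on $E$ for $x > 0$ (as already observed in Section \ref{sec:proof}), so $\alpha'(x) = F(x, \alpha(x)) > 0$ for $x > 0$. For the lower bound, I would first pass McCann's inequality \eqref{eq:mccan} to the limit by setting $\nu = kx$, dividing by $k^2$ to get $\alpha_k(x)^2 \geq \alpha_k(0)^2 + x^2/R^2$, and then applying Theorem \ref{thm1} together with \eqref{eq:a0>a} to obtain the non-strict bound $\alpha(x)^2 \geq \pi^2/(R-1)^2 + x^2/R^2$. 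To upgrade to strict inequality, I would prove the pointwise relation
\[
F(x, y) > \frac{x}{R^2 y}, \qquad (x, y) \in E,\ x > 0,
\]
and then integrate $(\alpha^2)' = 2\alpha F(x, \alpha) > 2x/R^2$ on $(0, x)$. On $E_1$ this reduces via the explicit form of $F$ to $\arccos u > u\sqrt{1-u^2}$ on $(0, 1)$, equivalent to $\theta < \tan \theta$ on $(0, \pi/2)$. On $E_2$, writing the claim as $P(u) > Q(u)$ with $u = x/(Ry)$, $P(u) = \arccos u - u\sqrt{1-u^2}$, and $Q(u) = \arccos(Ru) - (u/R)\sqrt{1-R^2 u^2}$, one has $(P - Q)(0) = 0$ and the sign of $(P - Q)'$ reduces, after cross-multiplication and squaring, to the algebraic identity $(R^2 - 1)^2 > 0$.

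For the upper bound, I would set $\beta(x) := R\alpha(x)$, so that \eqref{eq:Fk:upper2} says exactly that $\beta$ is a subsolution of \eqref{eq:ivp} with matching initial value,
\[
\beta'(x) \leq \frac{\arccos(x/\beta(x))}{\sqrt{1-(x/\beta(x))^2}}, \qquad \beta(0) = \frac{\pi R}{R-1} = \tilde{\iota}(0).
\]
Since the right-hand side of \eqref{eq:ivp} is smooth and strictly increasing in $y$ (by the monotonicity of $H(s) = \arccos(s)/\sqrt{1-s^2}$ used in Section \ref{sec:proof}), the standard differential-inequality comparison gives $\beta \leq \tilde{\iota}$, i.e., $\alpha \leq \tilde{\iota}/R$. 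Strict inequality for $x > 0$ starts from $\beta'(0) = F(0, \alpha(0)) = 0 < \pi/2 = \tilde{\iota}'(0)$, which yields $\beta < \tilde{\iota}$ on a right neighbourhood of $0$. To rule out a first contact point $x_1 > 0$, I would use that \eqref{eq:Fk:upper2} is actually strict on $E_2$---a reformulation of the strict monotonicity of $g(s) := s\arccos(s)/\sqrt{1-s^2}$ on $(0, 1)$, which itself follows from $g'(s) = [\arccos s - s\sqrt{1-s^2}]/(1-s^2)^{3/2} > 0$---while on $E_1$ the two ODEs coincide and Lipschitz uniqueness of \eqref{eq:ivp} forces agreement in a neighbourhood of $x_1$, contradicting the strict gap just before. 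The final estimate $\tilde{\iota}(x)/R \leq \pi/(R-1) + \pi x/(2R)$ is exactly \eqref{eq:i:upper} divided by $R$.

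The main technical work is the pointwise inequality $F(x, y) > x/(R^2 y)$ on $E_2$, which underpins the strict lower bound; the squaring step collapsing to $(R^2 - 1)^2 > 0$ is the heart of the matter. The strict upgrade of \eqref{eq:Fk:upper2} and the propagation of the strict comparison against $\tilde{\iota}$ are more routine once the monotonicity of the auxiliary function $g$ is in hand.
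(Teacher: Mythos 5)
Your proposal is correct and follows essentially the same route as the paper: monotonicity from the positivity of $F$, the upper bound by comparing $R\alpha$ with $\tilde{\iota}$ through \eqref{eq:Fk:upper2} and the Lipschitz comparison theorem, and the strict lower bound by establishing the pointwise inequality $F(x,y)>\frac{x}{R^2 y}$ on $E$ and integrating $(\alpha^2)'>\frac{2x}{R^2}$. The only substantive deviation is your verification of $F>\frac{x}{R^2y}$ on $E_2$, which you obtain by differentiating $P-Q$ and squaring until the claim collapses to $(R^2-1)^2>0$, whereas the paper applies the arccosine addition formula $\arccos\left(\frac{z}{R}\right)-\arccos(z)=\arccos\left(\frac{z^2}{R}+\sqrt{1-\left(\frac{z}{R}\right)^2}\sqrt{1-z^2}\right)$ together with $\arccos t>\sqrt{1-t^2}$; both verifications are elementary and correct, and your extra observations (the non-strict bound via McCann's inequality, and the strictness of \eqref{eq:Fk:upper2} on $E_2$ via the monotonicity of $s\mapsto s\arccos(s)/\sqrt{1-s^2}$) are valid but not needed once the standard Gronwall-type comparison is invoked.
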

\begin{proof}
	The monotonicity of $\alpha(x)$ is a consequence of the positivity of $F$ on $E$. Recalling that the right-hand side of \eqref{eq:Fk:upper2} is Lipschitz provided $y > \varepsilon$ for some $\varepsilon > 0$, the first upper bound in \eqref{eq:b:estimate} follows from \eqref{eq:Fk:upper2} by noting that for sufficiently small $x \geq 0$ the inequality in \eqref{eq:Fk:upper2} is strict. The last inequality in \eqref{eq:b:estimate} follows from \eqref{eq:i:upper}.
	
	To obtain a lower bound for $\alpha(x)$ we note that
	$$
	\arccos\left(\frac{z}{R}\right) - \arccos\left(z\right)
	=
	\arccos\left(\frac{z^2}{R}+ \sqrt{1 - \left(\frac{z}{R}\right)^2}\sqrt{1 - z^2}\right), 
	\quad 
	0<z<1.
	$$
	Applying now the inequality $\arccos t > \sqrt{1-t^2}$ for $t \in [0,1)$, we get
	\begin{equation}\label{eq:F'>1/R}
	\frac{\arccos\left(\frac{z}{R}\right) - \arccos\left(z\right)}{R\,\sqrt{1 - \left(\frac{z}{R}\right)^2} - \sqrt{1 - z^2}} > \frac{z}{R} > \frac{z}{R^2},
	\quad 
	0<z<1.
	\end{equation}
	On the other hand,
	\begin{equation}\label{eq:F:asympt}
	\frac{\arccos\left(\frac{z}{R}\right)}{R\,\sqrt{1 - \left(\frac{z}{R}\right)^2}} \to \frac{1}{R}
	\quad 
	\text{as }
	z \to R-,
	\end{equation}
	and the left-hand side of \eqref{eq:F:asympt} decreases on $(1,R)$.
	Therefore, we deduce that
	$$
	\frac{\arccos\left(\frac{z}{R}\right) - \mathrm{Ac}\left(z\right)}{R\,\sqrt{1 - \left(\frac{z}{R}\right)^2} - \mathrm{Sr}\left(1-z^2\right)} > \frac{z}{R^2},
	\quad 
	0 < z < R,
	$$
	which implies that $F(x,y) > \frac{x}{R^2 y}$ for $(x,y) \in E$, $x>0$, and hence the lower bound for $\alpha(x)$ in \eqref{eq:b:estimate} follows.
\end{proof}

\begin{remark}\label{rem:analit}
	The value of $\alpha = \alpha(x)$ can be also found as a unique solution of the transcendental equation
	\begin{equation}
	\label{eq:analit_1}
	R \alpha \sqrt{1 - \left(\frac{x}{R \alpha}\right)^2} - \alpha \sqrt{1 - \left(\frac{x}{\alpha}\right)^2}
	=
	x \arccos \left(\frac{x}{R \alpha}\right) - x \arccos \left(\frac{x}{\alpha}\right) + \pi
	\end{equation}
	for $x \in \left[0, x_0\right]$, and
	\begin{equation}
	\label{eq:analit_2}
	R \alpha \sqrt{1 - \left(\frac{x}{R \alpha}\right)^2} - x_0 \sqrt{R^2-1} 
	=
	x \arccos\left(\frac{x}{R \alpha}\right) - x_0 \arccos\left(\frac{1}{R}\right)
	\end{equation}
	for $x \in \left[x_0, +\infty\right)$, 
	where
	$$
	x_0 := \frac{\pi}{\sqrt{R^2-1} -\arccos\left(1/R\right)}.
	$$
	In fact, knowing that the value $\lim\limits_{k \to \infty} \frac{a_{kx,k}}{k}$ exists by Theorem \ref{thm1}, one can obtain \eqref{eq:analit_1} using the leading terms of trigonometric Debye asymptotics of Bessel functions $J_\nu$ and $Y_\nu$ (see \cite[pp.~244-245]{watson}). On the other hand, since $\alpha(x_0) = x_0$, we have $\alpha(x) < x$ for all $x > x_0$ (see \eqref{eq:F>1R2}), and the equation \eqref{eq:analit_2} can be obtained by integrating \eqref{eq:main_diff} using, e.g., the substitute $y = \frac{x}{\sin \gamma}$; see \cite[(2.2) and (2.11)]{EL1}.
\end{remark}

\section{The upper bound for \texorpdfstring{$a_{\nu,k}$}{a}}\label{sec:upper-bound}
Let us turn to the proof of the upper bound \eqref{eq:a:upper-bounds} of Theorem \ref{thm:upper}. 
To this end, consider the eigenvalue problem
\begin{equation}\label{eq:diff:radial}
\left\{
\begin{aligned}
-(r u')' &= \mu r u, \quad r \in (1,R), \\
u(1) &= u(R) =0.
\end{aligned}
\right.
\end{equation}
It can be checked that the $k$-th eigenvalue $\mu_k$ of \eqref{eq:diff:radial} is equal to $a_{0,k}^2$, and
$$
\psi_{k}(r) = J_0(a_{0,k} r) Y_0(a_{0,k}) - J_0(a_{0,k}) Y_0(a_{0,k} r)
$$
is the unique (modulo scaling) eigenfunction associated with $\mu_k$. 
Moreover, $\psi_k$ has exactly $k$ nodal domains. 
Let us denote $r_0=1$ and $r_k=R$, for convenience. Then $\mu_k$ can be characterized as
\begin{equation}\label{eq:mu:def}
\mu_k = \min\limits_{r_0<r_1<\dots<r_{k-1}<r_k} \max \left\{
\mu_1^{(r_0,r_1)}, \dots,
\mu_1^{(r_{k-1},r_k)}
\right\},
\end{equation}
see \cite[Lemma 2.2]{BD} with minor modifications.
Here $\mu_1^{(r_i,r_{i+1})}$ is the first eigenvalue of \eqref{eq:diff:radial} on $(r_i,r_{i+1})$, that is,
$$
\mu_1^{(r_i,r_{i+1})} = \inf_{u \in W_0^{1,2}(r_i,r_{i+1}) \setminus \{0\}} \frac{\int_{r_i}^{r_{i+1}} r |u'|^2 \, dr}{\int_{r_i}^{r_{i+1}} r |u|^2 \, dr}.
$$

The proof of Theorem \ref{thm:upper} will be based on the upper estimate \eqref{eq:Fk:upper} for $F_k$ and the following fact.
\begin{prop}\label{prop:a0-upper-bound}
	Let $R>1$. Then for any $k=1,2,\ldots$ the following inequality is satisfied:
	\begin{equation}\label{eq:a0-upper-bound}
	a_{0,k} < \frac{\pi k}{R-1}.
	\end{equation}
\end{prop}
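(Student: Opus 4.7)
The plan is to use the variational characterization \eqref{eq:mu:def} of $\mu_k = a_{0,k}^2$ on the equidistant partition $r_i = 1 + i(R-1)/k$ for $i = 0, 1, \ldots, k$, so that each subinterval has length $(R-1)/k$. On each $(r_i, r_{i+1})$ I would insert the Dirichlet sine test function
\[
u_i(r) = \sin\!\left(\frac{\pi(r-r_i)}{r_{i+1}-r_i}\right) \in W_0^{1,2}(r_i, r_{i+1})
\]
into the Rayleigh quotient defining $\mu_1^{(r_i, r_{i+1})}$. The key computation is that the weighted integrals $\int_{r_i}^{r_{i+1}} r \cos^2\!\left(\pi(r-r_i)/(r_{i+1}-r_i)\right) dr$ and $\int_{r_i}^{r_{i+1}} r \sin^2\!\left(\pi(r-r_i)/(r_{i+1}-r_i)\right) dr$ coincide: their difference is $\int_{r_i}^{r_{i+1}} r \cos\!\left(2\pi(r-r_i)/(r_{i+1}-r_i)\right) dr$, which vanishes by a direct calculation (one integration by parts leaves integrals of $\cos$ and $\sin$ over a full period). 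This immediately yields, for each $i$,
\[
\mu_1^{(r_i, r_{i+1})} \leq \frac{\pi^2}{(r_{i+1}-r_i)^2} = \frac{\pi^2 k^2}{(R-1)^2}.
\]

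Combining this with \eqref{eq:mu:def} already gives the non-strict bound $a_{0,k} \leq \pi k/(R-1)$. To upgrade it to the strict inequality \eqref{eq:a0-upper-bound}, I would invoke the standard fact that equality in the Rayleigh quotient upper bound for a simple Sturm-Liouville first eigenvalue forces the test function to be a scalar multiple of the first eigenfunction. Here, the first eigenfunction of $-(ru')' = \mu r u$ on $(r_i, r_{i+1})$ with Dirichlet data is a nonzero linear combination of $J_0\!\left(\sqrt{\mu_1^{(r_i,r_{i+1})}}\,r\right)$ and $Y_0\!\left(\sqrt{\mu_1^{(r_i,r_{i+1})}}\,r\right)$, so it is certainly not equal (up to scaling) to $\sin\!\left(\pi(r-r_i)/(r_{i+1}-r_i)\right)$. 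Hence the bound is strict for every $i$, and taking the maximum over $i$ in \eqref{eq:mu:def} yields $\mu_k < \pi^2 k^2/(R-1)^2$, i.e., \eqref{eq:a0-upper-bound}.

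The main obstacle I anticipate is precisely this strictness step: the sine test function produces the value $\pi^2/(r_{i+1}-r_i)^2$ \emph{exactly}, so the crude Rayleigh quotient estimate delivers only a non-strict bound. Resolving this requires the uniqueness (up to scaling) of the first eigenfunction of the weighted problem and the observation that it is of Bessel type rather than trigonometric. A less delicate alternative would be to perturb the sine into some test function that is manifestly not of Bessel type, but the uniqueness argument is cleaner and leaves the explicit computation intact.
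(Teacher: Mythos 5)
Your proposal is correct and follows essentially the same route as the paper: the same min--max characterization \eqref{eq:mu:def} on the equidistant partition $r_i = 1 + i(R-1)/k$, the same test function (your shifted sine $\sin(\pi(r-r_i)/(r_{i+1}-r_i))$ agrees up to sign and normalization with the paper's global trigonometric $v$ on each subinterval), the same cancellation $\int r\cos^2 = \int r\sin^2$ giving the exact value $\pi^2k^2/(R-1)^2$, and the same strictness argument via uniqueness of the first eigenfunction of the weighted problem, which is of Bessel rather than trigonometric type.
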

\begin{proof}
	We use the characterization \eqref{eq:mu:def} to estimate $\mu_k = a_{0,k}^2$ from above. 
	As an admissible function for each $\mu_1^{(r_i,r_{i+1})}$ we use
	$$
	v(r) = \cos \left(\frac{\pi k r}{R-1}\right) - \cot \left(\frac{\pi k}{R-1}\right) \sin \left(\frac{\pi k r}{R-1}\right)
	$$
	whenever $\frac{k}{R-1} \not\in \mathbb{N}$, and 
	$$
	v(r) = \tan \left(\frac{\pi k}{R-1}\right)\cos \left(\frac{\pi k r}{R-1}\right) - \sin \left(\frac{\pi k r}{R-1}\right)
	$$
	otherwise.
	Denoting $r_i = 1 + \frac{(R-1)i}{k}$, we see that $v \in W_0^{1,2}(r_i, r_{i+1}) \setminus \{0\}$ and 
	\begin{equation}\label{eq:mu1leq}
	\mu_1^{(r_i,r_{i+1})} \leq \frac{\int_{r_i}^{r_{i+1}} r |v'|^2 \, dr}{\int_{r_i}^{r_{i+1}} r |v|^2 \, dr} 
	= 
	\frac{\pi^2 k^2}{(R-1)^2}, \quad i=0,\dots,k-1.
	\end{equation}
	Recalling that $\psi_k$ satisfies \eqref{eq:diff:radial} and $v$ satisfies  $-v'' = \frac{\pi^2 k^2}{(R-1)^2} v$, we deduce that $\psi_k$ and $v$ are linearly independent on any $(r_i,r_{i+1})$.
	Thus, since each $\mu_1^{(r_i,r_{i+1})}$ possesses a unique minimizer (modulo scaling), we conclude that the strict inequality in \eqref{eq:mu1leq} holds true, and hence \eqref{eq:mu:def} implies \eqref{eq:a0-upper-bound}.
\end{proof}

	Let us now prove Theorem \ref{thm:upper}.
\begin{proof}[Proof of Theorem \ref{thm:upper}]
	Under the notation \eqref{eq:bk}, the inequality \eqref{eq:a0-upper-bound} reads as $\alpha_k(0) < \frac{\pi}{R-1}$. Using this fact and the inequality \eqref{eq:Fk:upper}, we conclude that 
	\begin{equation*}
	\alpha_k(x) < \frac{\tilde{\iota}(x)}{R}
	\end{equation*}
	where $\tilde{\iota}(x)$ is the solution of \eqref{eq:ivp},
	which reads as
	\begin{equation}\label{eq:bk:upper-estimate0}
	a_{\nu,k} < \frac{\tilde{\iota}(\nu/k)k}{R}.
	\end{equation}
	Applying the inequality \eqref{eq:i:upper}, we obtain the desired upper bound \eqref{eq:a:upper-bounds}.
\end{proof}

\begin{remark}
	One can derive other upper bounds for $a_{\nu,k}$ from \eqref{eq:bk:upper-estimate0} by estimating $\tilde{\iota}(x)$ from above in a different way than \eqref{eq:i:upper}.
\end{remark}

\section{Pleijel's constant for annuli}\label{sec:pleijel}

For each fixed $R>1$, one can numerically solve the initial value problem \eqref{eq:main_diff} (or equations \eqref{eq:analit_1} and \eqref{eq:analit_2}) to obtain $\alpha(x)$ and hence compute the value
\begin{equation}\label{eq:Pl}
\frac{8}{R^2-1} \, \sup_{x>0} \left\{\frac{x}{\alpha(x)^2}\right\}
\end{equation}
from Proposition \ref{prop:pleijel}. 
Using a build-in ODE-solver of \textsl{Mathematica}, we obtained several approximate values of \eqref{eq:Pl} listed in Table \ref{tab}. Recall that if the multiplicity of any sufficiently large eigenvalue of \eqref{eq:dirichlet} on $A_R$ is at most two, then \eqref{eq:Pl} gives an exact value of the Pleijel constant $Pl(A_R)$. 

\begin{table}[!h]
	\centering
	\begin{tabular}{| c || c | c | c | c | c | c | c |}
		\hline
		$R = $ 			& 1.05	& 1.1	& 1.5	& 2	& 4 & 6 & 10 \\
		\hline
		$Pl(A_R) \geq $ & 0.636367 & 0.635656	& 0.619308 & 0.58654 & 0.492055 & 0.474482 & 0.465961 \\
		\hline
	\end{tabular}
	\caption{Several approximate values of \eqref{eq:Pl}.}
	\label{tab}
\end{table}
The values from Table \ref{tab} suggest that the following asymptotics are satisfied (see \cite[Remark 1.8]{bobkov}):
\begin{align*}
%\label{eq:Rto1}
\lim_{R \to 1} \left[\frac{8}{R^2-1} \, \sup_{x>0} \left\{\frac{x}{\alpha(x)^2}\right\}\right] 
&= 
Pl(\mathcal{R}) = \frac{2}{\pi} = 0.6366197\ldots, \\
%\label{eq:Rto2}
\lim_{R \to \infty} \left[\frac{8}{R^2-1} \, \sup_{x>0} \left\{\frac{x}{\alpha(x)^2}\right\}\right] 
&= 
Pl(B) = 0.4613019\ldots,
\end{align*}
and \eqref{eq:Pl} lies in between these two values. 
Here $\mathcal{R}$ is any rectangle $(0,a) \times (0,b)$ with irrational ratio $\frac{a^2}{b^2}$ (see \cite[Proposition 5.1]{Helff}), and $B$ is a planar disk (see \cite[Theorem 1.3]{bobkov}).

It is also informative to compare the values from Table \ref{tab} with the behavior of the ratio $\frac{\mu(\varphi_k)}{k}$ for large $k$, where $\varphi_k$ is the $k$-th eigenfunction of \eqref{eq:dirichlet} on $A_R$ of the form
\begin{equation*}\label{eq:eigenfunction}
\varphi_n(r,\theta) = \left(J_\nu(a_{\nu,n} r) Y_\nu(a_{\nu,n}) - J_\nu(a_{\nu,n}) Y_\nu(a_{\nu,n} r) \right)\cos(\nu \theta),
\end{equation*}
for some $n = 1,2,\dots$ and $\nu = 0,1,\dots$, where $r \in (1,R)$ and $\theta \in [0,2\pi)$. 
We present the corresponding plots on Figures \ref{fig2} and \ref{fig3}.

\begin{figure}[ht]
	\centering
	\includegraphics[width=0.9\linewidth]{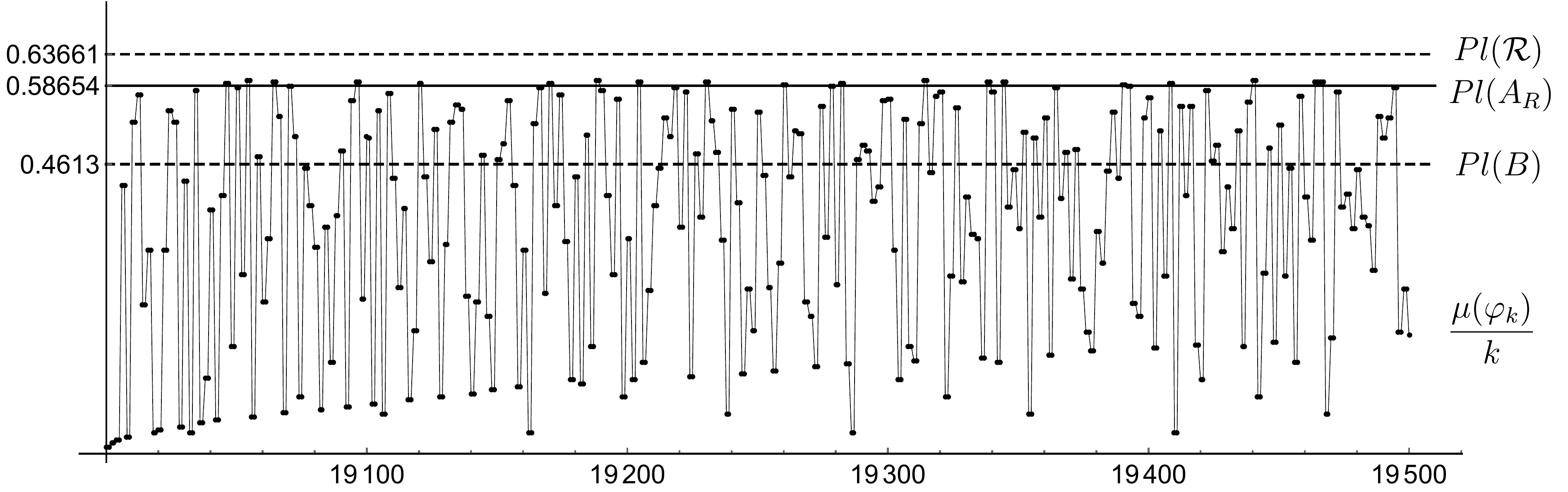}\\
	\caption{$R=2$. The values of $\frac{\mu(\varphi_k)}{k}$ for $k = 19000,\dots, 19500$.}
	\label{fig2}
\end{figure}

\begin{figure}[ht]
	\centering
	\includegraphics[width=0.9\linewidth]{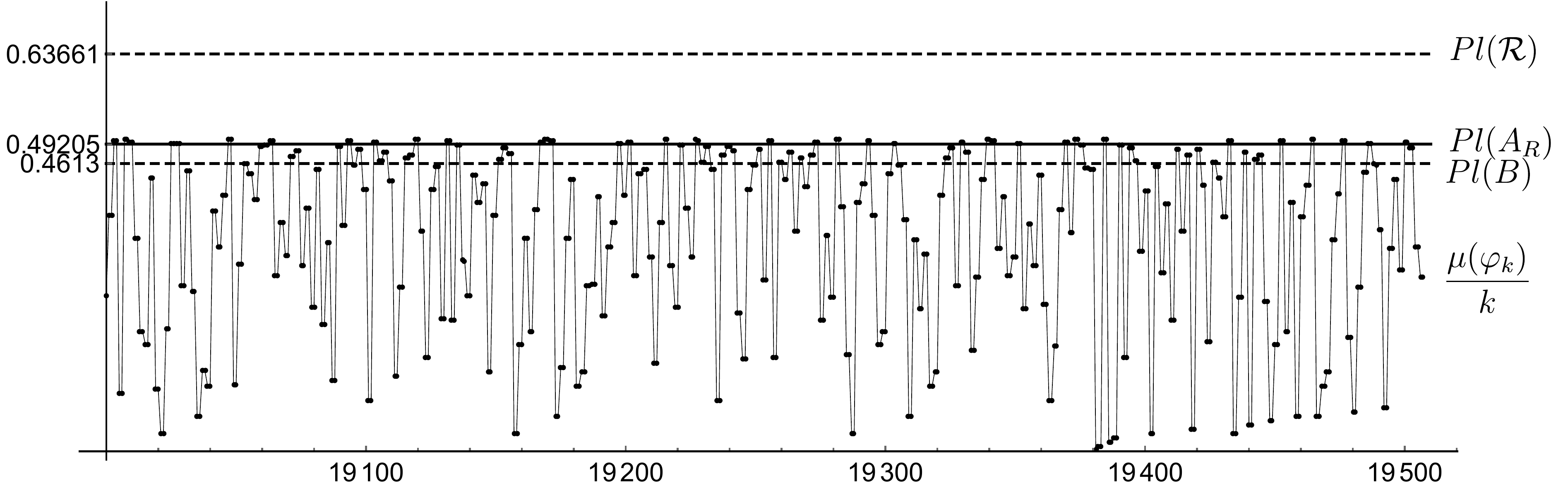}\\
	\caption{$R=4$. The values of $\frac{\mu(\varphi_k)}{k}$ for $k = 19000,\dots, 19500$.}
	\label{fig3}
\end{figure}

\appendix
\section{Appendix}\label{sec:appendix}

\begin{lemma}
	Let $\nu,z \in \mathbb{R}$ and $R>0$. Then $f_{\nu,R}(z)$ is even with respect to $\nu$ and $z$.
\end{lemma}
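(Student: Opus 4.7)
The plan is to reduce both symmetries to the standard series/connection formulas for $J_\nu$ and $Y_\nu$, exploiting the fact that $J_\nu$ and $Y_\nu$ individually have $z$-dependence which is only ``half-even,'' whereas the particular combination forming $f_{\nu,R}$ cancels the odd part.

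First, for evenness in $z$, I would write
\[
J_\nu(z) = (z/2)^{\nu} \phi_\nu(z), \qquad \phi_\nu(z) := \sum_{m=0}^{\infty} \frac{(-1)^m (z/2)^{2m}}{m!\,\Gamma(m+\nu+1)},
\]
observing that $\phi_\nu$ is a power series in $z^2$ and therefore even in $z$. Taking $\nu \notin \mathbb{Z}$, I would use $Y_\nu(z) = \bigl(\cos(\nu\pi)J_\nu(z) - J_{-\nu}(z)\bigr)/\sin(\nu\pi)$ together with the analogous representation $J_{-\nu}(z) = (z/2)^{-\nu}\phi_{-\nu}(z)$, and substitute into the defining formula for $f_{\nu,R}$. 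The two terms containing the factor $(z/2)^{2\nu}\cos(\nu\pi)\phi_\nu(Rz)\phi_\nu(z)$ cancel, leaving
\[
f_{\nu,R}(z) = \frac{R^{-\nu}\phi_\nu(z)\phi_{-\nu}(Rz) - R^{\nu}\phi_\nu(Rz)\phi_{-\nu}(z)}{\sin(\nu\pi)},
\]
which is manifestly even in $z$ since each $\phi_{\pm\nu}$ depends only on $z^2$. The integer-$\nu$ case then follows by continuity in $\nu$.

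Second, for evenness in $\nu$, I would use the connection formulas (valid for $\nu \notin \mathbb{Z}$)
\[
J_{-\nu}(z) = \cos(\nu\pi)J_\nu(z) - \sin(\nu\pi)Y_\nu(z), \qquad Y_{-\nu}(z) = \sin(\nu\pi)J_\nu(z) + \cos(\nu\pi)Y_\nu(z),
\]
which follow directly from the definition of $Y_\nu$. Abbreviating $A=J_\nu(Rz)$, $B=Y_\nu(Rz)$, $C=J_\nu(z)$, $D=Y_\nu(z)$, $c=\cos(\nu\pi)$, $s=\sin(\nu\pi)$, I would expand
\[
f_{-\nu,R}(z) = (cA-sB)(sC+cD) - (cC-sD)(sA+cB),
\]
and collect terms: the $AC$ and $BD$ contributions cancel, while the $AD$ and $BC$ contributions combine with coefficient $c^2+s^2=1$ to give $AD-BC = f_{\nu,R}(z)$. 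Continuity in $\nu$ again handles integer values.

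No step is a real obstacle: the whole argument is a careful bookkeeping exercise with the standard Bessel identities, and the only mild subtlety is passing to integer $\nu$ via continuity (using, for $Y_\nu$, the well-known limit definition so that the identities above extend by continuity).
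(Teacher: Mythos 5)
Your proof is correct, and it is worth separating the two halves when comparing with the paper. For evenness in $\nu$, your argument is essentially the paper's: both rest on the connection formula $Y_\nu = (J_\nu\cos\nu\pi - J_{-\nu})/\sin\nu\pi$; you merely repackage it as the rotation $J_{-\nu}=cJ_\nu-sY_\nu$, $Y_{-\nu}=sJ_\nu+cY_\nu$, so that the cross-product $AD-BC$ is preserved because the rotation has determinant $c^2+s^2=1$ --- a tidy way to organize the same bookkeeping. For evenness in $z$, however, you take a genuinely different route. The paper invokes the analytic-continuation formulas $J_\nu(-z)=e^{i\pi\nu}J_\nu(z)$ and $Y_\nu(-z)=e^{-i\pi\nu}Y_\nu(z)+2i\cos(\nu\pi)J_\nu(z)$ and checks that the extra terms cancel in the cross-product; you instead exhibit
\[
f_{\nu,R}(z)=\frac{J_\nu(z)J_{-\nu}(Rz)-J_\nu(Rz)J_{-\nu}(z)}{\sin(\nu\pi)}
=\frac{R^{-\nu}\phi_\nu(z)\phi_{-\nu}(Rz)-R^{\nu}\phi_\nu(Rz)\phi_{-\nu}(z)}{\sin(\nu\pi)},
\]
a manifest power series in $z^2$ after the branch factors $(z/2)^{\pm\nu}$ cancel. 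Your version buys a closed form for $f_{\nu,R}$ purely in terms of $J_{\pm\nu}$ and sidesteps the branch ambiguity of $J_\nu$, $Y_\nu$ at negative argument, at the cost of a limiting argument at integer $\nu$ (where your displayed quotient is of the form $0/0$; note you are only passing the \emph{identity} $f_{\nu,R}(z)=f_{\nu,R}(-z)$ to the limit, which is legitimate since $f_{\nu,R}$ is continuous in $\nu$ via the limit definition of $Y_n$). The paper's version is shorter but leans on the continuation formulas and, for integer $\nu$, on the separate relations $J_{-n}=(-1)^nJ_n$, $Y_{-n}=(-1)^nY_n$. Both are complete proofs.
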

\begin{proof}
	Evenness of $f_{\nu,R}(z)$ with respect to $\nu$ follows by applying the equalities
	$$
	J_{-\nu}(z) = (-1)^\nu J_\nu(z)
	\quad 
	\text{and}
	\quad
	Y_{-\nu}(z) = (-1)^\nu Y_\nu(z),
	\quad
	\text{provided }
	\nu \in \mathbb{Z},
	$$
	and from the relation
	$$
	Y_\nu(z) = \frac{J_\nu(z) \cos (\nu \pi) - J_{-\nu}(z)}{\sin (\nu \pi)},
	\quad 
	\text{provided }
	\nu \not\in \mathbb{Z}.
	$$
	Evenness of $f_{\nu,R}(z)$ with respect to $z$ follows from the relations \cite[p.~75]{watson}
	$$
	J_\nu(-z)=e^{i\pi \nu}J_\nu(z)
	\quad \text{and} \quad
	Y_\nu(-z)=e^{-i\pi \nu}Y_\nu(z) + 2i \cos (\nu \pi) J_\nu(z),
	\quad z \in \mathbb{C}.
	\pushQED{\qed}
	\qedhere
	$$
\end{proof}

\begin{lemma}
	Any zero $a_{\nu,k}$ of $f_{\nu,R}$ satisfies \eqref{eq:willis}.
\end{lemma}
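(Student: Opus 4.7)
The plan is to differentiate the defining relation $f_{\nu,R}(a_{\nu,k})=0$ implicitly in $\nu$. Since $f_{\nu,R}$ is jointly smooth in $(\nu,z)$ and its positive zeros are simple (so $\partial_z f_{\nu,R}(a)\neq 0$ at $a:=a_{\nu,k}$), the implicit function theorem yields
\[
\frac{da_{\nu,k}}{d\nu}=-\frac{\partial_\nu f_{\nu,R}(a)}{\partial_z f_{\nu,R}(a)}.
\]
The task reduces to computing each partial in closed form without ever dividing by any of $J_\nu(a), Y_\nu(a), J_\nu(Ra), Y_\nu(Ra)$ individually; this is exactly the point where Willis' derivation uses the redundant hypothesis that no two of these vanish simultaneously.

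For the denominator I would change basis in Bessel's equation. Set $N:=J_\nu^2(a)+Y_\nu^2(a)>0$ and introduce the two solutions
\[
u(z):=-Y_\nu(a)J_\nu(z)+J_\nu(a)Y_\nu(z),\qquad v(z):=\frac{J_\nu(a)J_\nu(z)+Y_\nu(a)Y_\nu(z)}{N}.
\]
A short computation gives $u(a)=0$, $v(a)=1$, $W[u,v](z)=-2/(\pi z)$, the Pythagoras-type identity $u(z)^2/N+Nv(z)^2=J_\nu^2(z)+Y_\nu^2(z)$, and the algebraic rewriting $f_{\nu,R}(z)=u(z)v(Rz)-u(Rz)v(z)$. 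The condition $f_{\nu,R}(a)=0$ is then equivalent to $u(Ra)=0$. Differentiating the identity $f_{\nu,R}(z)=u(z)v(Rz)-u(Rz)v(z)$ at $z=a$, using the Wronskian to find $u'(a)=2/(\pi a)$ and $u'(Ra)v(Ra)=2/(\pi Ra)$, and invoking the Pythagoras relation at $Ra$ to express $v(Ra)^2$, yields
\[
\partial_z f_{\nu,R}(a)=\frac{2}{\pi a}\cdot\frac{(J_\nu^2(Ra)+Y_\nu^2(Ra))-(J_\nu^2(a)+Y_\nu^2(a))}{J_\nu(a)J_\nu(Ra)+Y_\nu(a)Y_\nu(Ra)}.
\]
This formula holds at every zero: the proportionality of the rows of the singular matrix encoding $f_{\nu,R}(a)=0$ gives $|J_\nu(a)J_\nu(Ra)+Y_\nu(a)Y_\nu(Ra)|=\sqrt{(J_\nu^2(a)+Y_\nu^2(a))(J_\nu^2(Ra)+Y_\nu^2(Ra))}$, so the denominator never vanishes, regardless of whether some of the four Bessel values happen to be zero.

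For the numerator I would insert standard integral representations of $\partial_\nu J_\nu(z)$ and $\partial_\nu Y_\nu(z)$ (the same ones that yield \eqref{eq:derivative_j}) together with Nicholson's formula $J_\nu^2(z)+Y_\nu^2(z)=(8/\pi^2)\int_0^\infty K_0(2z\sinh t)\cosh(2\nu t)\,dt$ into
\[
\partial_\nu f_{\nu,R}(a)=\partial_\nu J_\nu(Ra)\,Y_\nu(a)+J_\nu(Ra)\partial_\nu Y_\nu(a)-\partial_\nu J_\nu(a)\,Y_\nu(Ra)-J_\nu(a)\partial_\nu Y_\nu(Ra),
\]
keeping all four values $J_\nu(a),Y_\nu(a),J_\nu(Ra),Y_\nu(Ra)$ as formal symbols throughout. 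Symmetric regrouping of the resulting $K_0$-integrals and a single application of the scalar constraint $J_\nu(a)Y_\nu(Ra)=J_\nu(Ra)Y_\nu(a)$ at the end should collapse the expression to
\[
\partial_\nu f_{\nu,R}(a)=\frac{4\bigl[(J_\nu^2(a)+Y_\nu^2(a))\,I_R-(J_\nu^2(Ra)+Y_\nu^2(Ra))\,I_1\bigr]}{\pi\bigl[J_\nu(a)J_\nu(Ra)+Y_\nu(a)Y_\nu(Ra)\bigr]},
\]
with $I_1:=\int_0^\infty K_0(2a\sinh t)e^{-2\nu t}\,dt$ and $I_R:=\int_0^\infty K_0(2Ra\sinh t)e^{-2\nu t}\,dt$. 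Substituting both partials into $-\partial_\nu f_{\nu,R}(a)/\partial_z f_{\nu,R}(a)$ cancels the common factor $J_\nu(a)J_\nu(Ra)+Y_\nu(a)Y_\nu(Ra)$ and produces exactly \eqref{eq:willis}.

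The main obstacle is to keep the numerator computation valid in the degenerate cases $J_\nu(a)=J_\nu(Ra)=0$ and $Y_\nu(a)=Y_\nu(Ra)=0$: the point-wise substitutions (for instance, solving for $Y_\nu(Ra)$ in terms of the other three Bessel values) typically used in Willis' shorter derivation become singular precisely there. The required discipline is to work entirely at the level of integral identities in the four symbols $J_\nu(a),Y_\nu(a),J_\nu(Ra),Y_\nu(Ra)$ and to invoke the scalar constraint $J_\nu(a)Y_\nu(Ra)=J_\nu(Ra)Y_\nu(a)$ only once, at the very end, to identify the symmetric factor $J_\nu(a)J_\nu(Ra)+Y_\nu(a)Y_\nu(Ra)$ that is then cancelled in the final ratio rather than divided by.
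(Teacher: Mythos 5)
Your argument is correct, and the key identities check out: $f_{\nu,R}(z)=u(z)v(Rz)-u(Rz)v(z)$, $u(a)=0$, $v(a)=1$, $W[u,v]=-2/(\pi z)$, the stated closed forms for $\partial_z f_{\nu,R}(a)$ and $\partial_\nu f_{\nu,R}(a)$, and the final ratio indeed reproduces \eqref{eq:willis}. However, your route is genuinely different from the paper's. The paper does not re-derive Willis' formula in the generic case at all: it cites \cite{willis} when neither $J_\nu(a)=J_\nu(Ra)=0$ nor $Y_\nu(a)=Y_\nu(Ra)=0$, and treats the two degenerate cases separately, using the interlacing of zeros of $J_\nu$ and $Y_\nu$ to guarantee $Y_\nu(a),Y_\nu(Ra)\neq 0$ (respectively $J_\nu(a),J_\nu(Ra)\neq 0$), rewriting the zero condition as $J_\nu(a)/Y_\nu(a)=J_\nu(Ra)/Y_\nu(Ra)$ and differentiating that quotient implicitly; the degeneracy is then used to convert $Y_\nu^2(a)/Y_\nu^2(Ra)$ into the ratio of the moduli $J_\nu^2+Y_\nu^2$ appearing in \eqref{eq:willis}. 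Your approach gives a single, case-free derivation valid at every zero, at the cost of redoing Willis' computation; what it buys is that the assumption Willis imposed is seen to be vacuous rather than merely removable by a separate argument. The only thin spot in your write-up is the ``symmetric regrouping'' of the numerator, which you assert but do not carry out. It can be closed in two lines by the same proportionality you already use for the denominator: since $J_\nu$ and $Y_\nu$ have no common positive zero, the singular constraint gives $(J_\nu(Ra),Y_\nu(Ra))=\lambda\,(J_\nu(a),Y_\nu(a))$ with $\lambda=v(Ra)\neq 0$, whence
\begin{equation*}
\partial_\nu f_{\nu,R}(a)
=\bigl(Y_\nu(a)\,\partial_\nu J_\nu(Ra)-J_\nu(a)\,\partial_\nu Y_\nu(Ra)\bigr)
+\lambda\bigl(J_\nu(a)\,\partial_\nu Y_\nu(a)-Y_\nu(a)\,\partial_\nu J_\nu(a)\bigr)
=\frac{4}{\pi}\Bigl(\frac{I_R}{\lambda}-\lambda I_1\Bigr)
\end{equation*}
by Watson's identity $J_\nu(z)\,\partial_\nu Y_\nu(z)-Y_\nu(z)\,\partial_\nu J_\nu(z)=-\tfrac{4}{\pi}\int_0^\infty K_0(2z\sinh t)\,e^{-2\nu t}\,dt$ \cite[(2), p.~444]{watson}, where $I_1,I_R$ are your integrals. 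Since $\lambda^2=\bigl(J_\nu^2(Ra)+Y_\nu^2(Ra)\bigr)/\bigl(J_\nu^2(a)+Y_\nu^2(a)\bigr)$ and $N\lambda=J_\nu(a)J_\nu(Ra)+Y_\nu(a)Y_\nu(Ra)$, this is exactly your claimed numerator, and no further manipulation with individual integral representations of $\partial_\nu J_\nu$ and $\partial_\nu Y_\nu$ is needed.
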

\begin{proof}
	Let us take any zero $z := a_{\nu,k}$.
	If neither $J_{\nu}(z)=J_\nu(Rz)=0$ nor $Y_{\nu}(z)=Y_\nu(Rz)=0$, then \eqref{eq:willis} is proved in \cite{willis}. 
	Assume that $J_{\nu}(z)=J_\nu(Rz)=0$. 
	Note that zeros of $J_\nu$ and $Y_\nu$ are interlacing, which implies that $Y_{\nu}(z), Y_\nu(Rz) \neq 0$. 
	Therefore, we can rewrite $f_{\nu,R}(z)=0$ in the form
	$$
	\frac{J_\nu(z)}{Y_\nu(z)} - \frac{J_\nu(Rz)}{Y_\nu(Rz)} = 0.
	$$
	Recalling that any zero of $f_{\nu,R}$ is simple (see, e.g., \cite{cochran}), the rate of change of $z=z(\nu)$ with respect to $\nu$ can be found from the total derivative
	$$
	\left.\frac{\partial}{\partial x}\left(\frac{J_\nu(x)}{Y_\nu(x)} - \frac{J_\nu(Rx)}{Y_\nu(Rx)}\right)\right|_{x=z} \frac{dz}{d\nu} + 
	\left.\frac{\partial}{\partial \nu}\left(\frac{J_\nu(x)}{Y_\nu(x)} - \frac{J_\nu(Rx)}{Y_\nu(Rx)}\right)\right|_{x=z} = 0.
	$$
	Then, using the relations \cite[(1), p.~76]{watson} and \cite[(2), p.~444]{watson}, we formally derive
	$$
	\frac{dz}{d\nu} = \frac{2z}{\left(\frac{Y_\nu^2(z)}{Y_\nu^2(Rz)}-1\right)}
	\left(\frac{Y_\nu^2(z)}{Y_\nu^2(Rz)} \int_0^\infty K_0(2Rz \sinh t) e^{-2\nu t}\, dt - \int_0^\infty K_0(2z \sinh t) e^{-2\nu t}\, dt\right).
	$$
	Finally, recalling that $J_{\nu}(z)=J_\nu(Rz)=0$, $R>1$, and that $J_\nu^2(x) + Y_\nu^2(x)$ decreases with respect to $x>0$ \cite[p.~446]{watson}, we have
	$$
	\frac{Y_\nu^2(z)}{Y_\nu^2(Rz)} = \frac{J_\nu^2(z) + Y_\nu^2(z)}{J_\nu^2(Rz) + Y_\nu^2(Rz)} > 1,
	$$
	and hence \eqref{eq:willis} follows.
	The proof of \eqref{eq:willis} in the case $Y_{\nu}(z)=Y_\nu(Rz)=0$ can be handled in much the same way as above; see also \cite{willis}.
\end{proof}

\bigskip
\noindent
{\bf Acknowledgements.}
This research has been supported by the project 18-03253S of the Grant Agency of the Czech Republic, and by the project LO1506 of the Czech Ministry of Education, Youth and Sports.
The author thanks the anonymous colleague who pointed out the validity of equation \eqref{eq:analit_1}.
Moreover, the author is grateful to the anonymous referee for constructive remarks and valuable suggestions which helped to improve the manuscript.

\addcontentsline{toc}{section}{\refname}
\small

\end{document}